\theoremstyle{plain}
\newtheorem{theorem}{Theorem}
\newtheorem{corollary}{Corollary}[section]
\newtheorem{lemma}[corollary]{Lemma}
\newtheorem{proposition}[corollary]{Proposition}
\theoremstyle{remark}
\newtheorem{remark}[corollary]{Remark}
\numberwithin{equation}{section}
\newcommand{\NN}{\mathbb{N}}
\newcommand{\RR}{\mathbb{R}}
\newcommand{\Set}[1]{\left\{ #1  \right\}}
\newcommand{\SetSuchThat}[2]{\left\{\, #1 \ | \ #2 \, \right\}}
\newcommand{\Cardinality}[1]{\# #1}
\newcommand{\ScalarProduct}[3]{(#1| #2)_{#3}}
\newcommand{\TangentFields}[1]{\Gamma(#1)}
\newcommand{\LieAlgebra}[1]{\mathfrak{#1}}
\newcommand{\VectorFlow}[1]{\exp\left(#1\right)}
\newcommand{\Cases}[1]{
\left\{ \begin{array}{ll}
#1 
\end{array}\right.
}
\newcommand{\Case}[2]{ #1 & \text{for } #2}
\newcommand{\Manifold}{{\mathfrak{M}}}
\newcommand{\Solution}{\hat x}
\newcommand{\MainSeries}{\hat{S}}
\newcommand{\IteratedInt}[1]{\Upsilon^{#1}}
\newcommand{\Domain}[1]{D(#1)}
\newcommand{\MultiIndex}[1]{\mathbf #1}
\newcommand{\MultiIndexSet}[1]{M(#1)}
\newcommand{\MultiIndexSetOrdered}[1]{M_{\leq}(#1)}
\newcommand{\MultiIndexSetZero}[1]{M_0(#1)}
\newcommand{\NumberOfWords}{\#}
\newcommand{\Number}{N}
\newcommand{\NS}[2]{{#1}_{#2}}
\newcommand{\NewtonSymbol}[2]{
\left(\!\!  \begin{array}{c}
#1 \\
#2
\end{array} \!\!\right) 
}
\newcommand{\NewtonSymbolGeneral}[3]{\frac{{#1}!}{{#2}_1 !\cdots {#2}_{#3} !}}
\newcommand{\Trees}[2]{\mathcal{T}^{#1}_{#2}}
\newcommand{\Tree}{\mathsf {t}}
\newcommand{\Vertex}{\mathsf {v}}
\newcommand{\ShuffleProduct}{\shuffle}
\newcommand{\EmptyWord}{\mathrm{1}}
\newcommand{\ExpShuffle}[1]{\exp_{\ShuffleProduct}(#1)}
\newcommand{\Polynomials}[2]{#1\langle#2\rangle}
\newcommand{\Series}[2]{#1\langle\langle#2\rangle\rangle}
\newcommand{\WordsInSeriesOfLength}[2]{S_{#2}^{#1}}
\newcommand{\AOneSeries}{S_{\LieAlgebra{a}_1}}
\newcommand{\Letters}{\mathrm{A}}
\newcommand{\WordsOfLength}[1]{\Letters^*_{#1}}
\begin{document}

\title[Expansion of Solution of Non-autonomous Polynomial Differential Equation]{On Expansion of a Solution of General Non-autonomous Polynomial Differential Equation}

\author[G. Pietrzkowski]{Gabriel Pietrzkowski$^\dagger$ }
\thanks{$\dagger$ Institute of Applied Mathematics and Mechanics, Faculty of Mathematics, Informatics and Mechanics, University of Warsaw; Banacha 2, 02-097 Warszawa, Poland; \phone +48-22-55 44 524.}

\address{Institute of Applied Mathematics and Mechanics, Faculty of Mathematics, Informatics and Mechanics, University of Warsaw; Banacha 2, 02-097 Warszawa, Poland; \phone +48-22-55 44 524.}

\email{G.Pietrzkowski@mimuw.edu.pl}

\begin{abstract}
We give a recursive formula for an expansion of a solution of a general non-autonomous polynomial differential equation. The formula is given on the algebraic level with a use of shuffle product. This approach minimizes the number of integrations on each order of expansion. Using combinatorics of trees we estimate the radius of convergence of the expansion.

\smallskip \smallskip
\noindent {\scshape Keywords.} polynomial differential equation, generalized Abel equation, Riccati equation, shuffle product, combinatorics of trees
\end{abstract}



\maketitle

\section{Introduction}
\label{sec:Introduction}

Consider a non-autonomous polynomial differential equation, known also as a generalized Abel differential equation 
\begin{align}
\label{eq:PreMain}
\dot x(t) &= u_0(t) +  u_1(t)x(t) +\cdots+ u_i(t)x^i(t) + \cdots +  u_n(t) x^n(t), \\
\nonumber
x(0) &= x_0,
\end{align}
with a solution $x:[0,T]\to \RR$ on a small segment of the reals. In this class of differential equation there are: for $n=1$ the linear equation with well known formula for the general solution; for $n=2$ the Riccati equation well known both for theoretical \cite{Redheffer56Solutions,Redheffer57Riccati,Carinena07Riccati,Carinena11Riccati} and practical (see \cite{Carinena2011Lie} and references therein at the beginning of section 4) reasons; for $n=3$ the Abel differential equation of the first kind studied theoretically \cite{Mak2002New} and for practical reasons (\cite{Mak2001Solutions,Harko2003Relativistic,Xu2011Short} and references in \cite{Mak2002New,Carinena11Geometric}); for $n >3$ the generalized Abel differential equations \cite{Alwash2005Periodic, Alwash2007Periodic}.
Assuming $X_i = x^i \frac \partial {\partial x}$, for $i=0,\ldots,n$, are differential vector fields on $\RR$ one can, following Fliess \cite{Fliess81Fonctionnelles} (see also \cite{Kawski97NoncommutativePower, Gray2002Fliess} and \cite{Gray2008NonCausal} with references therein), expand the solution of the equation in terms of iterated integrals 
\begin{align*}
 \int_0^{t} \int_0^{t_k}\cdots \int_0^{t_2}u_{{i_k}}(t_k)\cdots u_{{i_1}}(t_1) \, dt_1\ldots dt_{k-1} dt_k
\end{align*}
and iterated differential operators $X_{i_1}\cdots X_{i_k}$ acting on the identity function $h(x) = x$ and evaluated at the zero point. In this approach one does not use a specific form of the vector fields, i.e. the fact that they are of polynomial type. In this article we show another approach to expanding a solution of the above equation in terms of iterated integrals with a use of an important feature of Chen's iterated integral mapping that it is a shuffle algebra homomorphism (see the comment after formula (\ref{eq:IteratedIntDef})). 
In fact with the use of Chen's mapping (\ref{eq:IteratedIntDef}) we will be able to consider a purely algebraic problem instead of the analytic one. More precisely, assuming that the solution of (\ref{eq:PreMain}) can be expanded in terms of iterated integrals we show that an algebraic equation must be satisfied in the space of non-commutative series on $n+1$ letters. It will be easy to show existence of the solution of the algebraic equation by a recursive formula of its homogeneous parts. The Chen's mapping gives us the expansion of the solution of initial problem as we state in Theorem \ref{thm:Analytic}. This is done in section \ref{sec:Existece}. Then, in section \ref{sec:CountingTrees}, by counting elements of a class of trees in two different ways we show convergence of the defined expansion of $x(t)$ for small times -- this is stated in Theorem \ref{thm:Main} (in section \ref{sec:Existece}).

As an application of this general approach we consider, in section \ref{sec:Examples},  the cases of the linear equation (i.e., $n=1$), the Riccati equation ($n=2$) and the one where there are only two non-vanishing summands. In the first case we reestablish a well known formula for the general solution, and in the second case we deduce  convergence of the series defining coordinates of the second kind connected with $\LieAlgebra{a}(1)$-type involutive distribution \cite{Pietrzkowski12Explicit}.

Finally, in section \ref{sec:Comparison}, we compare the Chen-Fliess approach with the one given in this article. It occurs that in the letter case the number of integrals to compute grows significantly slower with the order of approximation, than in the first case.

\section{Existence and convergence of an expansion}
\label{sec:Existece}

Let $n\in \NN$ \footnote{throughout the article we assume $\NN = \Set{0,1,2,3,\ldots}$ } , $T > 0$, and let $u_0,\ldots, u_n : [0,T] \to \RR$ be measurable and bounded (by a constant $M \in \RR$) functions. Consider a non-autonomous polynomial differential equation
\begin{align}
\label{eq:Main}
\dot x(t) &= u_0(t) + \NewtonSymbol n 1 u_1(t)x(t) +\cdots+ \NewtonSymbol{n}{i} u_i(t)x^i(t) + \cdots +  u_n(t) x^n(t), \\
\nonumber
x(0) &= 0.
\end{align}
Two comments are in order. Firstly, the Newton symbols occurring in the above formula are for convenience reasons -- without these constants it would be harder to estimate the radius of convergence of a defined series. Secondly, we assume the initial value equals zero. This is without loss of generality in a sense that with the linear change of variables $x \to x - x_0$ we can transform the equation with the initial value equals $x_0$ to another equation with different $u_i$'s.

Integrating both sides of the equation we get an integral equation
\begin{align}
\label{eq:MainIntegral}
x(t) = \int_0^{t} u_0(s) + \NewtonSymbol n 1 \, u_1(s)x(s) + \cdots + u_n(s) x^n(s)\, ds.
\end{align}
By Caratheory's theorem for a small $\epsilon> 0$ there exists an absolutely continues solution $\Solution : [0,\epsilon]\to\RR$ of the initial equation (\ref{eq:Main}) in a sense that $\Solution$ satisfies the integral equation (\ref{eq:MainIntegral}) for $t \in [0,\epsilon]$.  We want to express the solution $\Solution$ by means of iterated integrals of products of $u_i$'s. In order to do this we introduce some algebraic tools. 

To each function $u_i$ we assign a formal variable $a_i$, which we call a letter. The set of all letters $\Letters = \Set{a_0,\ldots,a_n}$ is called an alphabet. Juxtapositioning letters we can obtain words of an 
arbitrary length $k\in\NN$; the set of such words is denoted by $\WordsOfLength k = \SetSuchThat{b_1\cdots b_k}{b_1,\ldots, b_k \in \Letters}$. For $k=0$ the set $\WordsOfLength 0 = \Set{\EmptyWord}$ contains only one -- empty -- word. The set of all words is denoted by $\WordsOfLength{} = \bigcup_{k=0}^\infty \WordsOfLength{k}$.
The juxtaposition gives rise to an associative, noncommutative product on the set of words $\WordsOfLength{}\times\WordsOfLength{}\ni(v,w) \to v\cdot w = vw \in \WordsOfLength{}$ called the concatenation product; then the set $\WordsOfLength{}$ with the concatenation product and the neutral element $\EmptyWord\in\WordsOfLength{}$ is a free monoid generated by $\Letters$. Taking $\RR$-linear combination of words and 
bilinearly extending the concatenation product we get the $\RR$-algebras $\Polynomials{\RR}{\Letters}$ of noncommutative polynomials on $\Letters$ and $\Series{\RR}{\Letters}$ of noncommutative series on $\Letters$. 
In both algebras we can consider the bilinear product $\ShuffleProduct:\Series{\RR}{\Letters} \otimes\Series{\RR}{\Letters}\to \Series{\RR}{\Letters}$ -- the shuffle product --  defined recursively for words by $\EmptyWord\ShuffleProduct w = w\ShuffleProduct \EmptyWord = w$ for any $w\in\WordsOfLength{}$, and 
\begin{align}
\label{eq:ShuffleDef}
(vb)\ShuffleProduct(wc) = (v\ShuffleProduct(wc))\cdot b +  ((vb)\ShuffleProduct w)\cdot c
\end{align}
for all $b,c\in\Letters$ and $v, w\in\WordsOfLength{}$. 
It is easy to see that the shuffle product is commutative, thus with $\EmptyWord$ as the neutral element it gives rise to an additional commutative 
algebra structure on $\Polynomials{\RR}{\Letters}$ and $\Series{\RR}{\Letters}$. We will use both -- concatenation and shuffle -- products in our considerations.  It is important to indicate the priority of the shuffle product over the concatenation product in all formulas of this article, so that $v\ShuffleProduct w \cdot a$ always means $(v\ShuffleProduct w) \cdot a$.

On $\Series{\RR}{\Letters}$ we introduce a natural scalar product $\ScalarProduct{\cdot}{\cdot}{} : \Series{\RR}{\Letters} \times \Series{\RR}{\Letters} \to \RR$, which for elements $v,w \in \WordsOfLength{}\times\WordsOfLength{}$ is given by 
\begin{align*}
\ScalarProduct{v}{w}{} = 
\Cases{ 
\Case 1 {v=w} \\
\Case 0 {v\neq w}
}.
\end{align*}
For $S\in\Series{\RR}{\Letters}$, let $S_k \in \Polynomials{\RR}{\Letters}$ be the $k$-degree homogenous part of $S$, i.e., 
$$
S_k = \sum_{v\in\WordsOfLength{k}} \ScalarProduct{S}{v}{} \, v.
$$
Clearly, $S = \sum_{k=0}^\infty S_k$.

Define the linear homomorphism $\IteratedInt{t}:\Polynomials{\RR}{\Letters}\to \RR$ by $\IteratedInt t (\EmptyWord) = 1$, and
\begin{align*}
\WordsOfLength{k}\ni v = a_{i_1}\cdots a_{i_k} \mapsto \IteratedInt{t}(v) := \int_0^{t}u_{{i_k}}(t_k) \int_0^{t_k}\cdots \int_0^{t_2}u_{{i_1}}(t_1) \, dt_1\ldots dt_{k-1} dt_k.
\end{align*}
Equivalently, the homomorphism can be defined recursively by 
\begin{align}
\label{eq:IteratedIntDef}
\IteratedInt t(va_i) := \int_0^t  \IteratedInt s(v)\, u_i(s)\, ds
\end{align}
for any $ v\in\WordsOfLength{}$ and $a_{i}\in\Letters$. Since $u_i$'s are bounded the definition is correct for all $t \geq 0$.
One can check that $\IteratedInt t$ is in fact a shuffle algebra homomorphism, i.e., $\IteratedInt t (v\ShuffleProduct w) = \Upsilon^t(v)\,\Upsilon^t(w)$ (see \cite{Chen68Algebraic,Reutenauer93FreeLie}) which is a crucial feature in what follows. For a general series $S\in\Series{\RR}{\Letters}$ the homomorphism $\IteratedInt t$ is obviously not well defined since $\IteratedInt t (S)$ can be divergent. We restrict the definition of $\IteratedInt t $ to series $S\in\Series{\RR}{\Letters}$ and times $t \geq 0$ for which the series 
$$\sum_{k=0}^\infty \IteratedInt t (S_k)$$ is convergent.

Coming back to the initial problem, assume that there exists a series $\MainSeries\in\Series{\RR}{\Letters}$ such that the solution $\Solution$ of (\ref{eq:Main}) satisfies $\Solution(t) = \IteratedInt{t}(\MainSeries)$ for $t\in[0,\epsilon]$ (in particular $\IteratedInt{t}(\MainSeries)$ is convergent). Using the recursive definition of $\IteratedInt{t}$ (\ref{eq:IteratedIntDef}) and the fact that $\IteratedInt{t}$ is a shuffle algebra homomorphism, we get from (\ref{eq:MainIntegral}) that
\begin{align*}
\IteratedInt{t}(\MainSeries) = \IteratedInt{t}(a_0 + \NS n 1 \, \MainSeries\cdot a_1 + \NS n 2\, \MainSeries\ShuffleProduct\MainSeries\cdot a_2 + \ldots + \MainSeries^{\ShuffleProduct n}\cdot a_n),
\end{align*}
where we abbreviate $\NS n i = \NewtonSymbol n i$, and $\MainSeries^{\ShuffleProduct n}$ is defined recursively in a natural way, i.e., $\MainSeries^{\ShuffleProduct 0} = \EmptyWord$ and $\MainSeries^{\ShuffleProduct n} = \MainSeries\ShuffleProduct \MainSeries^{\ShuffleProduct (n-1)}$.
Now the point is that we can forget, for a moment, about the homomorphism and consider only the algebraic equation
\begin{align}
\label{eq:MainSeries}
\MainSeries = a_0 + \NS n 1\, \MainSeries\cdot a_1 + \NS n 2\, \MainSeries\ShuffleProduct\MainSeries\cdot a_2 + \ldots + \MainSeries^{\ShuffleProduct n}\cdot a_n.
\end{align}

\begin{proposition}
\label{prop:Existence}
There exists the unique solution $\MainSeries\in\Series{\RR}{\Letters}$ of the algebraic equation (\ref{eq:MainSeries}).
\end{proposition}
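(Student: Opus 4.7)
The plan is to work degree-by-degree using the natural length grading on $\Series{\RR}{\Letters}$. Write any candidate solution as $\MainSeries = \sum_{k=0}^\infty \MainSeries_k$ with $\MainSeries_k\in\Polynomials{\RR}{\Letters}$ homogeneous of degree $k$, and match degrees on both sides of (\ref{eq:MainSeries}). The key observation is that every term on the right-hand side ends with a concatenation by some letter $a_i$, so the right-hand side has no constant term, forcing $\MainSeries_0 = 0$.

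Next, I would exploit the fact that the shuffle product is graded: if $v\in\WordsOfLength{p}$ and $w\in\WordsOfLength{q}$ then $v\ShuffleProduct w$ is a sum of words of length $p+q$. Consequently, for any $i\geq 1$,
\begin{align*}
(\MainSeries^{\ShuffleProduct i})_{k-1} = \sum_{j_1+\cdots+j_i = k-1} \MainSeries_{j_1}\ShuffleProduct\cdots\ShuffleProduct \MainSeries_{j_i},
\end{align*}
and since $\MainSeries_0 = 0$ each $j_\ell \geq 1$, which forces $j_\ell \leq k-i$. Extracting the degree-$k$ homogeneous component from (\ref{eq:MainSeries}) therefore yields, for $k\geq 1$, a formula of the shape
\begin{align*}
\MainSeries_k = \delta_{k,1}\,a_0 + \sum_{i=1}^{\min(n,k-1)} \NS{n}{i}\left(\sum_{\substack{j_1+\cdots+j_i = k-1\\ j_\ell\geq 1}} \MainSeries_{j_1}\ShuffleProduct\cdots\ShuffleProduct \MainSeries_{j_i}\right)\cdot a_i,
\end{align*}
in which the right-hand side depends only on $\MainSeries_1,\ldots,\MainSeries_{k-1}$. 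In particular $\MainSeries_1 = a_0$, and each subsequent $\MainSeries_k$ is uniquely determined by the previous ones.

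I would then conclude by a straightforward induction on $k$: existence follows by taking the above recursion as a definition of $\MainSeries_k$, and uniqueness follows because any solution of (\ref{eq:MainSeries}) must satisfy the same recursion, so it agrees with the constructed series in every degree. Assembling the homogeneous parts into $\MainSeries = \sum_{k\geq 0}\MainSeries_k$ gives a well-defined element of $\Series{\RR}{\Letters}$ (no convergence issue arises here since formal series are simply sequences of their homogeneous components).

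The only place that requires a moment of care is verifying the shift of indices in the shuffle sum so that the recursion is genuinely strictly triangular; once that is clear, the argument is essentially bookkeeping. I do not expect a real obstacle in this proposition, as the structural features used — the grading, $\MainSeries_0=0$ coming from the trailing $a_i$ on the right-hand side, and the shuffle being degree-additive — are all built into the algebraic setup introduced above.
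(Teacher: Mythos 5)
Your proposal is correct and follows essentially the same route as the paper: extract homogeneous components of the equation, observe that the right-hand side has no degree-zero part so $\MainSeries_0=0$, and then note that the degree-$k$ component depends only on $\MainSeries_1,\ldots,\MainSeries_{k-1}$ (because the trailing letter $a_i$ shifts degrees up by one and $\MainSeries_0=0$ forces all shuffle factors to have degree at least one), giving a strictly triangular recursion that determines the series uniquely. Your displayed recursion is exactly the paper's formula \eqref{eq:Recurence} after the index shift $k\mapsto k+1$.
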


\begin{proof}
The equation under consideration must be satisfied for each homogeneous part, so we can split it into the following series of equations
\begin{align*}
\MainSeries_0 &= 0, &
\MainSeries_1 &= a_0, \\
\MainSeries_2 &= \NS n 1 \, \MainSeries_1\cdot a_1, &
\MainSeries_3 &= \NS n 1 \, \MainSeries_2\cdot a_1 + \NS n 2 \, \MainSeries_1\ShuffleProduct\MainSeries_1\cdot a_2, \\
\end{align*}
and for arbitrary $k\in\NN$
\begin{align}
\label{eq:Recurence}
\MainSeries_{k+1} &=  \sum_{i=1}^n \NewtonSymbol n i  \sum_{\MultiIndex{l}\in\MultiIndexSet{i}} \MainSeries_{l_1}\ShuffleProduct\cdots\ShuffleProduct\MainSeries_{l_i} \cdot a_i,
\end{align}
where the second sum is taken over multi-indices $\MultiIndex{l} = (l_1,\ldots,l_i)$ in
$$
\MultiIndexSet{i}= \SetSuchThat{(l_1,\ldots,l_i)\in\NN^i}{l_1,\ldots,l_i \geq 1,\, l_1 + \cdots + l_i = k }.
$$
We see that the homogeneous parts of the series $\MainSeries$ are defined recursively, therefore the series is defined uniquely. 
\end{proof}

Observe that from the recursive definition of $\IteratedInt t$ and a property $\IteratedInt t (v\ShuffleProduct w) = \IteratedInt t (v) \, \IteratedInt t (w)$ we get
 \begin{align}
 \label{eq:SolutionKOne}
  \Solution_{k+1}(t) &=  \sum_{i=1}^n \NewtonSymbol n i  \sum_{\MultiIndex{l}\in\MultiIndexSet{i}} \int_0^t \Solution_{l_1}(s)\cdots\Solution_{l_i}(s) \, u_i(s)\, ds,
 \end{align}
where we use an abbreviation $\Solution_k(t) = \IteratedInt t (\MainSeries_k)$. By this definition $\Solution(t) = \sum_{k=0}^\infty \Solution_k (t)$. Moreover, any permutation of $(l_1,\ldots,l_i)$ gives the same expression under the integral. For $\MultiIndex{l} \in \MultiIndexSet{i}$ denote by $R(\MultiIndex{l})$ the number of such permutations, i.e., 
$$
R(\MultiIndex{l}) = \Cardinality\SetSuchThat{\sigma \in \Sigma_i}{l_1 = l_{\sigma (1)},\ldots, l_i = l_{\sigma (i)}}.
$$
Then
\begin{align}
\label{eq:SolutionKTwo}
  \Solution_{k+1}(t) &=  \sum_{i=1}^n \NewtonSymbol n i  \sum_{\MultiIndex{l}\in\MultiIndexSetOrdered{i}} R(\MultiIndex{l}) \int_0^t \Solution_{l_1}(s)\cdots\Solution_{l_i}(s) \, u_i(s)\, ds,
 \end{align}
where the second sum is taken over
$$
\MultiIndexSetOrdered{i}= \SetSuchThat{(l_1,\ldots,l_i)\in\NN^i}{1 \leq l_1 \leq l_2 \leq \cdots \leq l_i,\, l_1 + \cdots + l_i = k }.
$$
 
Let us state it in the following theorem.

\begin{theorem}
\label{thm:Analytic}
 Let $\Solution_1(t) = \int_0^t u_0(s) \, ds$ and recursively for $k \geq 1$ $\Solution_{k+1}(t)$ is given by \eqref{eq:SolutionKOne} or \eqref{eq:SolutionKTwo}.
 Then $\Solution(t) = \sum_{k=1}^\infty \Solution_k (t)$ is a formal solution of the differential equation (\ref{eq:Main}).
\end{theorem}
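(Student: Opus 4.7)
The plan is to check that the formal series $\Solution(t) = \sum_{k \geq 1} \Solution_k(t)$, with $\Solution_k$ as defined recursively in the theorem, satisfies the integral form (\ref{eq:MainIntegral}) of the equation --- from which (\ref{eq:Main}) follows by differentiating term by term. Since the statement concerns only a formal solution, no convergence need be invoked; all manipulations can be carried out at the level of formal series graded by $\deg \Solution_k = k$, in which each homogeneous level is a finite sum of iterated integrals.

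First, I would isolate $\Solution_1(t) = \int_0^t u_0(s)\, ds$ and insert the recursion (\ref{eq:SolutionKOne}) into the tail, writing
\begin{align*}
\Solution(t) - \int_0^t u_0(s)\, ds \;=\; \sum_{k=1}^\infty \sum_{i=1}^n \NS n i \sum_{\MultiIndex l \in \MultiIndexSet i} \int_0^t \Solution_{l_1}(s) \cdots \Solution_{l_i}(s)\, u_i(s)\, ds.
\end{align*}
Second, I would swap the summations over $k$ and $i$ and then amalgamate the sum over $k \geq 1$ with the sum over $\MultiIndex l = (l_1,\ldots,l_i)$ satisfying $l_j \geq 1$ and $\sum_j l_j = k$ into a single sum over all $i$-tuples of positive integers. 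By the formal Cauchy product this combined sum is precisely the coefficient-wise expansion of $\Solution^i(s) = \bigl(\sum_{l \geq 1} \Solution_l(s)\bigr)^i$, so pulling the factor $\int_0^t \cdot\, u_i(s)\, ds$ back out yields exactly (\ref{eq:MainIntegral}).

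Third, differentiating both sides of (\ref{eq:MainIntegral}) with respect to $t$ produces (\ref{eq:Main}), and so the stated series is indeed a formal solution. The only step requiring care is the double-sum rearrangement, but since on each homogeneous level $k+1$ of the grading only multi-indices with $\sum_j l_j = k$ contribute, the rearrangement reduces on each level to a finite algebraic identity; hence no analytic justification is needed. Equivalently, one may observe that the entire computation above is nothing other than the image under the shuffle homomorphism $\IteratedInt{t}$ of the algebraic identity $\MainSeries = a_0 + \sum_{i=1}^n \NS n i\, \MainSeries^{\ShuffleProduct i} \cdot a_i$ from Proposition \ref{prop:Existence}, read off degree by degree; this is also why no genuine obstacle arises --- the shuffle homomorphism property of $\IteratedInt{t}$ together with its recursive definition (\ref{eq:IteratedIntDef}) has already done the work at the algebraic level.
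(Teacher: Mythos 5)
Your proposal is correct and is essentially the paper's own argument: the paper derives \eqref{eq:SolutionKOne} by applying the shuffle homomorphism $\IteratedInt{t}$ degree by degree to the algebraic recursion \eqref{eq:Recurence} for the unique solution $\MainSeries$ of \eqref{eq:MainSeries}, which is exactly the identity your Cauchy-product rearrangement verifies in the opposite direction (as you yourself note in your closing paragraph). The formal-series bookkeeping is handled correctly, since each homogeneous level involves only finitely many terms.
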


\begin{remark}
\label{rem:Growth}
 It is worth noticing that for a fixed $k \geq 1$, the number of integrals in formula \eqref{eq:SolutionKTwo} is the cardinality of $\sum_{i=1}^n \MultiIndexSetOrdered{i}$. This is less than the cardinality of $\sum_{i=1}^\infty \MultiIndexSetOrdered{i}$, which is the number of partitions of $k$. The first ten of these numbers are 1, 2, 3, 5, 7, 11, 15, 22, 30, 42. It means that the number of integrals that we have to perform to compute $\Solution_{k+1}$ grows quite slowly. In section \ref{sec:Comparison} we show that this growth is much slower than the growth of the number of non-zero components in the Chen-Fliess expansion. 
\end{remark}

 There remains the problem under what assumption the solution for the algebraic equation is in the domain of the homomorphism $\IteratedInt t : \Series{\RR}{\Letters} \supset\Domain{\IteratedInt t } \to \RR$, i.e., when $\sum_k\IteratedInt{t} (\MainSeries_k)$ is convergent.
In order to solve it, we need to compute the number of words (with multiplicities) in each homogeneous part of $\MainSeries$. So for $S\in\Polynomials \RR \Letters$ let us introduce the following definition
\begin{align*}
\NumberOfWords S  = \sum_{v\in\WordsOfLength{}} |\ScalarProduct{S}{v}{}|.
\end{align*}

\begin{proposition}
\label{prop:NumberOfWords}
If $\MainSeries_k$ is the $k$-degree homogeneous part of the solution $\MainSeries$ of the algebraic equation (\ref{eq:MainSeries}), then for $k \geq 1$, 
$
\NumberOfWords\MainSeries_k = ((n-1)(k-1) +1)\cdot((n-1)(k-2) +1)\cdots n
$ 
(in particular $\NumberOfWords\MainSeries_1 = 1$) and $\NumberOfWords\MainSeries_0 = 0$.
\end{proposition}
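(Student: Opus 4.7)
My plan is to prove this by induction on $k$, combining a compatibility property of $\NumberOfWords$ with the shuffle product with an exponential generating function computation. A first preparatory observation is that every coefficient of $\MainSeries_k$ is a non-negative integer, which follows by a routine induction on the recursion (\ref{eq:Recurence}): the coefficients $\NewtonSymbol{n}{i}$ are non-negative, and both the shuffle and the concatenation product preserve non-negativity. Hence $\NumberOfWords \MainSeries_k$ reduces to the plain sum of coefficients of $\MainSeries_k$, with no cancellations.

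The key combinatorial lemma is that for homogeneous polynomials $S_1, \ldots, S_i \in \Polynomials{\RR}{\Letters}$ of respective degrees $l_1, \ldots, l_i$ with non-negative coefficients,
$$\NumberOfWords(S_1 \ShuffleProduct \cdots \ShuffleProduct S_i) = \frac{(l_1+\cdots+l_i)!}{l_1!\cdots l_i!} \, \NumberOfWords S_1 \cdots \NumberOfWords S_i.$$
By bilinearity of the shuffle this reduces to the case where each $S_j$ is a single word, and there it follows from the elementary fact that the shuffle of $i$ words of respective lengths $l_1,\ldots,l_i$ expands as a sum of $\frac{(l_1+\cdots+l_i)!}{l_1!\cdots l_i!}$ words counted with multiplicity (easy induction on total length via (\ref{eq:ShuffleDef})). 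Together with the trivial identity $\NumberOfWords(S \cdot a_i) = \NumberOfWords S$, applying $\NumberOfWords$ to (\ref{eq:Recurence}) gives the numerical recursion
$$N_{k+1} = \sum_{i=1}^n \NewtonSymbol{n}{i} \sum_{\MultiIndex{l} \in \MultiIndexSet{i}} \frac{k!}{l_1! \cdots l_i!} \, N_{l_1} \cdots N_{l_i}, \qquad k \geq 1,$$
with $N_1 = 1$, where I write $N_k := \NumberOfWords \MainSeries_k$.

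To solve this, I pass to the exponential generating function $B(x) = \sum_{k\geq 1} N_k \, x^k/k!$. The multinomial factor $k!/(l_1!\cdots l_i!)$ is precisely the Cauchy-product coefficient for exponential generating functions, so the recursion translates into the first-order ODE
$$B'(x) = (1 + B(x))^n, \qquad B(0) = 0.$$
This is separable; an explicit integration yields $1 + B(x) = (1 - (n-1)x)^{-1/(n-1)}$, and expanding the right-hand side by the generalized binomial theorem gives
$$N_k = k! \, [x^k] \bigl(1-(n-1)x\bigr)^{-1/(n-1)} = \prod_{j=0}^{k-1}\bigl((n-1)j + 1\bigr),$$
which, absorbing the trivial $j=0$ factor, is exactly the formula claimed. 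The main delicate point is the shuffle-compatibility lemma: non-negativity of coefficients is essential there, since otherwise the absolute values in $\NumberOfWords$ would allow cancellations and the multinomial identity would fail; once that is established, the rest is a routine generating-function computation.
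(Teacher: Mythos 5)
Your proposal is correct, and the first half coincides with the paper's argument: the same shuffle-counting lemma $\NumberOfWords(v_1\ShuffleProduct\cdots\ShuffleProduct v_i)=\NewtonSymbolGeneral{(l_1+\cdots+l_i)}{l}{i}$ (for which your explicit insistence on non-negativity of the coefficients of $\MainSeries_k$ is a point the paper uses only tacitly) yields the identical numerical recursion for $N_k=\NumberOfWords\MainSeries_k$. Where you diverge is in solving that recursion. The paper rewrites it, after allowing zero parts, so that it matches the recursion of Lemma \ref{lem:Recurrence} for the cardinalities $\Cardinality{\Trees{n}{k}}$ of increasing full $n$-ary trees, and then counts those trees directly by a one-line induction (Lemma \ref{lem:Cardinality}); this buys a combinatorial interpretation of $\NumberOfWords\MainSeries_k$ and is the raison d'\^etre of the paper's Section \ref{sec:CountingTrees}. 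You instead pass to the exponential generating function $B(x)=\sum_{k\geq 1}N_k x^k/k!$ and observe that the multinomial weights turn the recursion into the formal ODE $B'=(1+B)^n$, $B(0)=0$ --- which is pleasingly just the majorant equation, i.e.\ \eqref{eq:Main} with all $u_i\equiv 1$ --- whose solution $1+B=(1-(n-1)x)^{-1/(n-1)}$ gives the product formula by the binomial series. This is shorter and more mechanical, and it explains conceptually why the radius of convergence $1/(M(n-1))$ in Theorem \ref{thm:Main} appears. Two small points you should make explicit: the separable-integration formula divides by $n-1$, so the cases $n=1$ (where $B'=1+B$ gives $N_k=1$) and $n=0$ (where $B=x$) must be handled by a direct remark or a limiting argument; and the ODE manipulation should be read as an identity of formal power series, verified by checking that the claimed closed form satisfies $B'=(1+B)^n$ coefficientwise, so that no convergence issue arises. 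With those caveats the argument is complete.
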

In particular, for $n=0$, $\NumberOfWords\MainSeries_1 = 1$ and $\NumberOfWords\MainSeries_k = 0$ otherwise; for $n = 1$, $\NumberOfWords\MainSeries_k = 1$; for $n = 2$, $\NumberOfWords\MainSeries_k = k!$; for $n = 3$, $\NumberOfWords\MainSeries_k = (2k-1)!!$;  for $n = 4$, $\NumberOfWords\MainSeries_k = (3k-2)!!!$, and so on.   

The proposition will be proved in section \ref{sec:CountingTrees}.

Now we state the theorem about convergence of the expansion.
\begin{theorem}
\label{thm:Main}
Let $n\in\NN$ and assume $u_0,\ldots, u_n:[0,T]\to\RR$ are measurable functions such that $|u_i| \leq M$ for an $M>0$. Let $\MainSeries\in\Series{\RR}{\Letters}$ be the unique solution of the algebraic equation (\ref{eq:MainSeries}). Then the series $\sum_k \Solution_k(t) = \sum_k \IteratedInt t (\MainSeries_k)$ is absolutely convergent for $0 \leq t < \min\Set{T, 1/(M(n-1))}$ if $n\geq 2$,  $0\leq t \leq T$ if $n=0,1$, and $\Solution(t) = \IteratedInt t (\MainSeries)$ is the solution of the differential equation (\ref{eq:Main}) on the same segment.
\end{theorem}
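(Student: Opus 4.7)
The proof has two parts: establishing absolute convergence of $\sum_k \IteratedInt t (\MainSeries_k)$ on the stated interval, and then verifying that the sum solves the integral equation \eqref{eq:MainIntegral}. For the first part, the essential input is Proposition~\ref{prop:NumberOfWords} combined with the standard simplex bound on iterated integrals. Since $|u_i| \leq M$, every word $v \in \WordsOfLength{k}$ satisfies $|\IteratedInt t (v)| \leq M^k t^k / k!$ (the volume of the ordered $k$-simplex times $M^k$), so that
\begin{align*}
|\IteratedInt t (\MainSeries_k)| \leq \NumberOfWords\MainSeries_k \cdot \frac{M^k t^k}{k!}.
\end{align*}

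Assume now $n \geq 2$. Proposition~\ref{prop:NumberOfWords} yields $\NumberOfWords\MainSeries_{k+1}/\NumberOfWords\MainSeries_k = (n-1)k+1$, so the ratio test applied to the majorant gives
\begin{align*}
\lim_{k\to\infty} \frac{\NumberOfWords\MainSeries_{k+1} M^{k+1} t^{k+1}/(k+1)!}{\NumberOfWords\MainSeries_k M^k t^k / k!} = Mt \cdot \lim_{k\to\infty}\frac{(n-1)k+1}{k+1} = (n-1)Mt,
\end{align*}
which is strictly less than $1$ precisely when $t < 1/(M(n-1))$. Hence $\sum_k \IteratedInt t (\MainSeries_k)$ converges absolutely on $[0,\min\{T, 1/(M(n-1))\})$, indeed uniformly on compact subintervals. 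For $n = 0$ only $\NumberOfWords\MainSeries_1$ is non-zero, and for $n = 1$ the majorant reduces to $e^{Mt}$; in both cases convergence holds on all of $[0,T]$.

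To finish, set $\Solution(t) := \sum_k \IteratedInt t (\MainSeries_k)$. Uniform convergence on compact subintervals makes $\Solution$ continuous. Summing the recursion \eqref{eq:SolutionKOne} over $k \geq 0$ and interchanging summation with integration (justified by the majorant via dominated convergence), the inner sums reorganize as a Cauchy product
\begin{align*}
\sum_{k \geq i}\sum_{\MultiIndex{l} \in \MultiIndexSet{i}} \Solution_{l_1}(s)\cdots\Solution_{l_i}(s) = \Solution(s)^i.
\end{align*}
Combined with $\Solution_1(t) = \int_0^t u_0(s)\, ds$ this produces exactly \eqref{eq:MainIntegral}, and absolute continuity of $\Solution$ in turn yields \eqref{eq:Main} almost everywhere. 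The delicate point is the last rearrangement: one must verify that the double series in $k$ and $\MultiIndex{l}$ is absolutely summable inside the integrand so that Fubini and the Cauchy product identity apply simultaneously. Here the $i$-th power of the convergent majorant $\sum_k \NumberOfWords\MainSeries_k M^k s^k/k!$ dominates $\sum_{\MultiIndex{l}}|\Solution_{l_1}(s)\cdots\Solution_{l_i}(s)|$ throughout the interval, supplying the required domination; the main work is therefore already absorbed into Proposition~\ref{prop:NumberOfWords}.
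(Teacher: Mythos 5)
Your convergence argument is exactly the paper's: the simplex bound $|\IteratedInt t (v)| \leq (Mt)^k/k!$, the word count from Proposition~\ref{prop:NumberOfWords}, and the ratio test giving the limit $(n-1)Mt < 1$, with $n=0,1$ handled separately. You go further than the paper's own proof by explicitly verifying, via the Cauchy-product rearrangement of \eqref{eq:SolutionKOne} and dominated convergence against the $i$-th power of the majorant, that the convergent sum actually satisfies the integral equation \eqref{eq:MainIntegral} --- a step the paper leaves implicit after its formal derivation of \eqref{eq:MainSeries} --- and this addition is sound.
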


\begin{proof}
For $v\in\WordsOfLength k $ the iterated integral $\IteratedInt t (v)$ is in fact taken over a $k$-dimensional simplex of $k$-dimensional measure $t^k/k!$. Since $u_i$'s are bounded by $M$ we have $|\IteratedInt t (v)| \leq (M t)^k/k!$. Therefore,
\begin{align*}
|\IteratedInt t (\MainSeries_k)| \leq \NumberOfWords\MainSeries_k\, \frac{(M t)^k}{k!} = \frac{ ((n-1)(k-1) +1)\cdot((n-1)(k-2) +1)\cdots n}{k!}\, {(Mt)^k}.
\end{align*}
Since $\NumberOfWords\MainSeries_{k+1}/\NumberOfWords\MainSeries_k = \frac {(n-1)k+1}{k+1} \xrightarrow[k\to\infty]{} n-1$, the series $\sum_k \IteratedInt t (\MainSeries_k)$ is convergent for $t < 1/(M(n-1))$ if $n \geq 2$, and $t < T$ if $n=1$. For $n=0$ the statement is obvious.
\end{proof}

\section{Counting trees}
\label{sec:CountingTrees}

In this section we prove Proposition \ref{prop:NumberOfWords}. In order to do this we consider certain classes of trees. It occurs that the number of trees in these particular classes equal $\NumberOfWords \MainSeries_k$ on the one hand and $((n-1)(k-1) +1)\cdot((n-1)(k-2) +1)\cdots n$ on the other hand.

For $k,n\in\NN$ let $\Trees{n}{k}$ denote the set of planar, rooted, full $n$-ary and increasingly labeled
trees on $k$  vertices. Recall that a tree is rooted if there exists a distinguished vertex called the root; is full $n$-ary if each vertex has exactly none or $n$ children; is on $k$ vertices if the number of 
vertices with $n$ children (parent vertices) is equal $k$; is increasingly labeled if the parent vertices 
are labeled by  natural numbers from 1 to $k$, and the labels increase along each branch starting at the root 
(in particular the root is labeled by "$1$"). A leaf of a tree is a non-parent vertex, i.e., a vertex 
without children. It is important to note that the number of leafs in each tree in $\Trees{n}{k}$ is 
constant and equals $(n-1)k +1$. Indeed, using induction on $k$ we see that for $k=0$ the only tree in $
\Trees{n}{0}$ has $0$ children, so the root is the only leaf; each tree $\Trees{n}{k}$ can be obtained from a tree $\Tree
\in\Trees{n}{k-1}$ by adding $n$ leafs to a certain leaf of $\Tree$, so 
the number of leafs increases by $(n-1)$.

Now we count the cardinality of $\Trees{n}{k}$ in two different ways.

\begin{lemma}
\label{lem:Cardinality}
The cardinality of $\Trees{n}{k}$ equals $\Cardinality{\Trees{n}{k}} = ((n-1)(k-1) +1)\cdot((n-1)(k-2) +1)\cdots n$ for $k\geq 1$ and $\Trees{n}{0}=1$.
\end{lemma}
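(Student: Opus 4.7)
The plan is to set up a simple recurrence on $k$ by removing, from each $\Tree\in\Trees{n}{k}$, the parent vertex carrying the maximal label $k$ together with its $n$ children. The key observation is that in any $\Tree\in\Trees{n}{k}$ this distinguished vertex has only leaves as children: since labels strictly increase along every branch from the root and $k$ is the largest label used, any child of the label-$k$ vertex that were itself a parent would have to carry a label strictly greater than $k$, which is impossible. Hence that vertex can be collapsed to a leaf, producing a tree $\Tree'\in\Trees{n}{k-1}$ together with a distinguished leaf $\Vertex$ of $\Tree'$ (the former site of the label-$k$ vertex).

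This operation has an evident inverse: given $\Tree'\in\Trees{n}{k-1}$ and a leaf $\Vertex$ of $\Tree'$, upgrade $\Vertex$ to a parent carrying the label $k$ and attach $n$ new leaves as its children in the fixed planar order. The result lies in $\Trees{n}{k}$ because all ancestors of $\Vertex$ in $\Tree'$ carry labels in $\{1,\ldots,k-1\}$, so the new label $k$ preserves the increasing-label condition; fullness and planarity are automatic. The two constructions are mutually inverse, giving a bijection between $\Trees{n}{k}$ and the set of pairs $(\Tree',\Vertex)$ with $\Tree'\in\Trees{n}{k-1}$ and $\Vertex$ a leaf of $\Tree'$.

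Combining this bijection with the leaf count $(n-1)(k-1)+1$ for trees in $\Trees{n}{k-1}$, which was already established in the paragraph preceding the lemma, yields the recurrence
\[
\Cardinality{\Trees{n}{k}} = \bigl((n-1)(k-1)+1\bigr)\cdot \Cardinality{\Trees{n}{k-1}} \qquad (k\geq 1),
\]
while the base case $\Cardinality{\Trees{n}{0}}=1$ is immediate, since the unique tree in $\Trees{n}{0}$ consists of a single vertex (the root, which is simultaneously a leaf). Unrolling the recurrence from $k$ down to $1$ gives the claimed product $((n-1)(k-1)+1)\cdot((n-1)(k-2)+1)\cdots n$. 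There is no serious obstacle here; the only delicate point is the justification that the label-$k$ vertex has only leaves as children, which however is forced by the maximality of $k$ together with the increasing-label condition.
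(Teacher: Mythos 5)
Your proof is correct and is essentially the paper's own argument: the paper runs the same bijection in the opposite direction (building each tree of $\Trees{n}{k+1}$ by attaching $n$ new children with label $k+1$ at one of the $(n-1)k+1$ leaves of a unique tree in $\Trees{n}{k}$), whereas you remove the maximal-label vertex; your explicit justification that this vertex has only leaves as children is a welcome detail the paper leaves implicit.
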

\begin{proof} The case $n=0$ is trivial.
Fix $n\in\NN$ s.t. $n\geq 1$. We proceed by induction on $k\in\NN$. For $k=0$ there is only one tree, so the statement is correct. Assume $\Cardinality{\Trees{n}{k} = ((n-1)(k-1) +1)\cdot((n-1)(k-2) +1)\cdots n}$. Each tree in $\Trees{n}{k+1}$ comes from the unique tree $\Tree$ in $\Trees{n}{k}$ by adding label "$k+1$" and $k$ vertices to a leaf of $\Tree$. Since the number of leafs of $\Tree$ is equal $(n-1)k +1$ we obtain the result. 
\end{proof}

\begin{lemma}
\label{lem:Recurrence}
For $k\in\NN $ the cardinality of $\Trees{n}{k+1}$ equals 
\begin{align*}
\Cardinality{\Trees{n}{k+1}} &=  \sum_{\MultiIndex{l}\in\MultiIndexSetZero{n}} \NewtonSymbolGeneral{k}{l}{n}\, \Cardinality{\Trees{n}{l_1}}\cdots \Cardinality{\Trees{n}{l_n}},
\end{align*}
where the sum is taken over multi-indices $\MultiIndex{l} = (l_1,\ldots,l_n)$ in
$$
\MultiIndexSetZero{n}= \SetSuchThat{(l_1,\ldots,l_n)\in\NN^n}{l_1 + \cdots + l_n = k }.
$$
\end{lemma}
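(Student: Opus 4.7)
The plan is to obtain the recurrence by a bijective decomposition of each tree in $\Trees{n}{k+1}$ at its root.

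Fix $\Tree\in\Trees{n}{k+1}$. Its root is labeled $1$ and, being a parent vertex, has exactly $n$ children arranged in a planar (left-to-right) order. Removing the root splits $\Tree$ into an ordered $n$-tuple of subtrees $(\Tree_1,\ldots,\Tree_n)$, each of which is again planar, rooted (at the corresponding child) and full $n$-ary. Let $l_i$ denote the number of parent vertices of $\Tree_i$; then $l_1+\cdots+l_n=k$, because the $k+1$ parent vertices of $\Tree$ are the root together with the parent vertices of the subtrees. Thus the $n$-tuple $(l_1,\ldots,l_n)$ ranges over $\MultiIndexSetZero{n}$, and I will count trees with prescribed sizes.

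For a fixed $\MultiIndex{l}\in\MultiIndexSetZero{n}$, I would first choose how to distribute the remaining labels $\{2,3,\ldots,k+1\}$ among the subtrees so that $\Tree_i$ receives exactly $l_i$ labels; this is a multinomial choice and can be made in $\NewtonSymbolGeneral{k}{l}{n}$ ways. Once a set $L_i\subset\{2,\ldots,k+1\}$ of size $l_i$ is assigned to $\Tree_i$, the unique order-preserving bijection $L_i\to\{1,\ldots,l_i\}$ relabels $\Tree_i$ into an increasingly labeled tree, and it does so without touching the planar shape. Because ``strictly increasing along each branch'' depends only on the order of labels and not on their actual values, this relabeling is a bijection between the set of increasingly labeled subtrees of shape $\Tree_i$ using the label set $L_i$ and the set $\Trees{n}{l_i}$. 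Furthermore, the relabeled root of $\Tree_i$ receives the value $1$, which matches the convention in $\Trees{n}{l_i}$, and the case $l_i=0$ simply corresponds to $\Tree_i$ being a single leaf with $|\Trees{n}{0}|=1$.

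Putting the construction together, the data $(\MultiIndex{l};\text{label partition};\Tree_1,\ldots,\Tree_n)$ is in bijection with $\Trees{n}{k+1}$. Summing over $\MultiIndex{l}\in\MultiIndexSetZero{n}$ gives
\begin{align*}
\Cardinality{\Trees{n}{k+1}} = \sum_{\MultiIndex{l}\in\MultiIndexSetZero{n}}\NewtonSymbolGeneral{k}{l}{n}\,\Cardinality{\Trees{n}{l_1}}\cdots\Cardinality{\Trees{n}{l_n}},
\end{align*}
which is the required identity. The only delicate point in the argument is verifying that the order-preserving relabeling of each $L_i$ really establishes a bijection with $\Trees{n}{l_i}$; this is where the ``increasing along branches'' condition must be checked to be an order-invariant property, and once that is observed the rest is a direct counting argument.
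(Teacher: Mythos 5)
Your proof is correct and follows essentially the same route as the paper's: decomposing each tree at its root into an ordered $n$-tuple of subtrees, distributing the labels $\{2,\ldots,k+1\}$ via a multinomial coefficient, and relabeling each subtree order-preservingly to land in $\Trees{n}{l_i}$. The paper's proof is this same bijection, stated as a disjoint-union identity.
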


\begin{proof}
First of all observe that for $k\in\NN$ each tree in $\Trees{n}{k+1}$  is uniquely given by $n$ trees $\Tree_1\in\Trees{n}{l_1},\ldots,\Tree_n\in\Trees{n}{l_n}$ such that $l_1 + \cdots + l_n = k$, and a partition of the set $\Set{2,\ldots,k+1}$ into $n$ disjoint sets $I_1,\ldots, I_n$ of the cardinality $\Cardinality I_i = l_i$ for $i=1,\ldots,n$ (we do not assume $I_i\neq \emptyset$), i.e.,
\begin{align*}
 \Trees{n}{k+1} \sim \bigsqcup_{\MultiIndex{l}\in\MultiIndexSetZero{n}} {\Trees{n}{l_1}}\times\cdots \times{\Trees{n}{l_n}} \times I(\MultiIndex{l}),
\end{align*}
where $I(\MultiIndex{l})$ is the set of all partitions of the set $\Set{2,\ldots,k+1}$ into $n$ disjoint sets $I_1,\ldots, I_n$ s.t.  $\Cardinality I_i = l_i$ for $i=1,\ldots,n$.
Indeed, the 
root of a given tree $\Tree \in \Trees{n}{k+1}$ has $n$ child vertices $\Vertex_1,\ldots,\Vertex_n$. Each $\Vertex_i$ is the root of a certain maximal subtree $\tilde\Tree_i$ of $\Tree$. We assume that $\tilde\Tree_i$ has $l_i$ parent vertices, which are labeled by some numbers $2 \leq a_i^1 < \cdots < a_i^{l_i} \leq k+1$. Obviously, $l_1 + \cdots + l_n = k$. Changing the label "$a^j_i$" into a label "$j$" we obtain a tree $\Tree_i\in\Trees{n}{l_i}$. 
Defining $I_i = \Set{a_i^1, \ldots , a_i^{l_i}}$ for $i=1,\ldots,n$, we have a partition of $\Set{2,\ldots,k+1}$ into $n$ disjoint sets, i.e., $\Set{2,\ldots,
k+1} = I_1\cup\cdots\cup I_n$. It is clear how to invert this procedure in order to get its uniqueness.

Using the above correspondence it is easy to establish the formula in the lemma since there are $\NewtonSymbolGeneral{k}{l}{n}$ possible partitions of the set $\Set{2,\ldots,k+1}$  into $n$ disjoint parts $I_1,\ldots,I_n$ such that $\Cardinality I_i = l_i \in \NN$, i.e., $\Cardinality I(\MultiIndex{l}) = \NewtonSymbolGeneral{k}{l}{n}$.
\end{proof}

We are now ready to prove Proposition \ref{prop:NumberOfWords}.

\begin{proof}[Proof of Proposition \ref{prop:NumberOfWords}]
First of all observe that for $i\in\NN$, $l_1,\ldots,l_i\in\NN$, and words $v_1\in\WordsOfLength{l_1},\ldots,v_i\in\WordsOfLength{l_i}$, the number of words in the shuffle product $v_1\ShuffleProduct\cdots\ShuffleProduct v_i$ equals 
$$\NumberOfWords(v_1\ShuffleProduct\cdots\ShuffleProduct v_i) = \NewtonSymbolGeneral{(l_1 + \cdots + l_i)}{l}{i}.$$
Using this fact, homogeneity of polynomials $\MainSeries_l$, and the recursive formula (\ref{eq:Recurence}) we get
\begin{align}
\label{eq:One}
\NumberOfWords\MainSeries_{k+1} &=  \sum_{i=1}^n \NewtonSymbol n i  \sum_{\MultiIndex{l}\in\MultiIndexSet{i}} \NumberOfWords\left(\MainSeries_{l_1}\ShuffleProduct\cdots\ShuffleProduct\MainSeries_{l_i}\right) \\ \nonumber
&=
\sum_{i=1}^n \NewtonSymbol n i  \sum_{\MultiIndex{l}\in\MultiIndexSet{i}} 
\NewtonSymbolGeneral{(l_1 + \cdots + l_i)}{l}{i}\cdot
\NumberOfWords\MainSeries_{l_1}\cdots\NumberOfWords\MainSeries_{l_i}
\end{align}
where $\MultiIndexSet{i}$ contains multi-indices $(l_1,\ldots,l_i)\in\NN^i$ such that ${l_1 + \cdots + l_i = k }$ and, what is  important, $l_1,\ldots,l_i \geq 1$. In order to get rid of the first sum, we introduce the  following notation
\begin{align*}
\Number_k = 
\Cases{
\Case{1}{k=0} \\
\Case{\NumberOfWords\MainSeries_k}{k\neq 0}
}
\end{align*} 
and allow $l_1,\ldots,l_i$ to be equal $0$. Then we rewrite (\ref{eq:One}) as
\begin{align}
\label{eq:Two}
\Number_{k+1} = 
\sum_{\MultiIndex{l}\in\MultiIndexSetZero{n}} 
\NewtonSymbolGeneral{(l_1 + \cdots + l_n)}{l}{n}\cdot
\Number_{l_1}\cdots\Number_{l_n},
\end{align}
where $\MultiIndexSetZero{n}= \SetSuchThat{(l_1,\ldots,l_n)\in\NN^n}{ l_1 + \cdots + l_n = k }.$
Indeed, if $l_1,\ldots,l_n \in \NN$ and only $i$ of them, say $\hat l_1,\ldots,\hat l_i$, are not equal $0$, then 
\begin{align*}
\NewtonSymbolGeneral{(l_1 + \cdots + l_n)}{l}{n}\cdot
\Number_{l_1}\cdots\Number_{l_n} = \NewtonSymbolGeneral{(\hat l_1 + \cdots + \hat l_i)}{\hat l}{i}\cdot
\Number_{\hat l_1}\cdots\Number_{\hat l_i}.
\end{align*}
Clearly, there are $\NewtonSymbol{n}{i}$ different multi-indices $(l_1,\ldots,l_n)$ satisfying this condition, and this is the reason for the Newton symbol to disappear in formula (\ref{eq:Two}).  

Finally, we see that by Lemma \ref{lem:Recurrence} the recursive formula (\ref{eq:Two}) for the numbers $\Number_k$ overlaps with the one for the cardinality of trees $\Cardinality\Trees{n}{k}$. Since, the series coincide for $k = 0$, i.e., $\Number_0 = \Cardinality\Trees{n}{0} = 1$, we conclude using Lemma \ref{lem:Cardinality} that 
$$
\NumberOfWords\MainSeries_k = \Number_k = ((n-1)(k-1) +1)\cdot((n-1)(k-2) +1)\cdots n
$$
for $k \geq 1$. The fact that $\NumberOfWords\MainSeries_0 = 0$ is trivial.
\end{proof}

\begin{remark}
The above proof can be simplified for $n = 0, 1$ when $\NumberOfWords\MainSeries_k \leq 1$, but also for $n = 2$. In this case the recursive formula (\ref{eq:Recurence}) gives
\begin{align*}
\NumberOfWords\MainSeries_{k+1} &= 2\,\NumberOfWords\MainSeries_{k} + \sum_{j=1}^{k-1} \NewtonSymbol{k}{j} \NumberOfWords\MainSeries_{j}\,\NumberOfWords\MainSeries_{k-j}.
\end{align*}
Assuming the inductive hypothesis $\NumberOfWords\MainSeries_l = l!$ for $l\leq k$ we obtain
\begin{align*}
\NumberOfWords\MainSeries_{k+1} &= 2\, k! + \sum_{j=1}^{k-1} \NewtonSymbol{k}{j} j! (k-j)! = 2\, k! + (k-1)\, k! = (k+1)!\, .
\end{align*}
\end{remark}

\section{Examples}
\label{sec:Examples}

In this section we discuss the three simplest cases when $n=0,1,2$, and the case where only $u_0$ and $u_n$ are not vanishing.
We will need one additional intuitive notation. Namely, for $S\in\Series{\RR}{\Letters}$ such that $\ScalarProduct{S}{\EmptyWord}{} = 0$ we define the shuffle exponent
\begin{align*}
\ExpShuffle{S} = \sum_{k=0}^\infty \frac{S^{\ShuffleProduct k}}{k!},
\end{align*}
where we recall that $S^{\ShuffleProduct 0} = \EmptyWord$ and $S^{\ShuffleProduct k} = S\ShuffleProduct S^{\ShuffleProduct (k-1)}$.

If $n=0$, then the equation (\ref{eq:Main}) is $\dot x(t) = u_0(t), x(0) = 0$ and obviously a solution is $\Solution(t) = \IteratedInt{t}(\MainSeries)$, where $\MainSeries = \MainSeries_1 = a_0$ is homogeneous of degree 1.

Let us pass to the case $n=1$ when (\ref{eq:Main}) is a linear equation $\dot x(t) = u_0(t) + u_1(t) x(t)$, $x(0) = 0$, which can be solved by variation of parameter. Let us see how it can be done using the series $\MainSeries$. Using recursive formula (\ref{eq:Recurence}) 
\begin{align*}
\MainSeries_0 &= 0, & \MainSeries_1 & = a_0, &  \MainSeries_{k+1} &= \MainSeries_k\cdot a_1,
\end{align*}
we get $\MainSeries = a_0\cdot(\EmptyWord +  a_1 + a_1^2 + a_1^3 + \cdots).$ If we use formula $a_1^k = a_1^{\ShuffleProduct k}/k!$, we get
\begin{align}
\label{eq:MainSeriesNOne}
\MainSeries = a_0\cdot \ExpShuffle{a_1}.
\end{align}
This expression looks nice, but there is a problem since $a_0$ factor is on the left hand side and therefore the expression will not simplify if we apply  $\IteratedInt{t}$ to it. 
In order to obtain the solution in a common form, we prove the following lemma.

\begin{lemma}
For $a_0, a_1 \in \Letters$ it follows that 
$$\sum_{k=0}^{\infty} a_0a_1^k = \sum_{k,l=0}^\infty (-1)^l\, a_1^k\ShuffleProduct(a_1^l a_0).$$
\end{lemma}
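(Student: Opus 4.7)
The plan is to expand each term on the right-hand side explicitly as a linear combination of words $a_1^p a_0 a_1^m$, and then read off the coefficients using the binomial theorem.

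First I would establish the auxiliary identity
\begin{align*}
a_1^k \ShuffleProduct (a_1^l a_0) = \sum_{p=l}^{k+l} \binom{p}{l}\, a_1^{p} a_0 a_1^{k+l-p}.
\end{align*}
This can be verified either by induction on $k$ using the recursive definition (\ref{eq:ShuffleDef}), or combinatorially: every word in the shuffle has length $k+l+1$ with $a_0$ at position $p+1$ for some $p$, and the $l$ letters $a_1$ coming from the factor $a_1^l a_0$ must all appear before $a_0$, forcing $p \geq l$; the number of interleavings producing $a_1^p a_0 a_1^{k+l-p}$ is exactly the number of ways to choose which $l$ of the $p$ slots before $a_0$ come from the second factor, namely $\binom{p}{l}$.

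Next, reindex the right-hand side by setting $m = k+l-p$, so that $k = p+m-l$. For a fixed word $a_1^p a_0 a_1^m$, only pairs $(k,l)$ with $0 \leq l \leq p$ (from $\binom{p}{l}\neq 0$) and $k = p+m-l \geq 0$ contribute, which is a finite set; hence the rearrangement is legitimate inside $\Series{\RR}{\Letters}$. Collecting these contributions, the coefficient of $a_1^p a_0 a_1^m$ on the right-hand side becomes
\begin{align*}
\sum_{l=0}^{p} (-1)^l \binom{p}{l} = (1-1)^p = \Cases{\Case{1}{p=0} \\ \Case{0}{p\geq 1}}.
\end{align*}
Hence only the words $a_0 a_1^m$ survive, each with coefficient $1$, and the right-hand side collapses to $\sum_{m=0}^\infty a_0 a_1^m$, which is the left-hand side.

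The only real obstacle is the first step: pinning down the binomial coefficient in the expansion of $a_1^k \ShuffleProduct (a_1^l a_0)$. Once that formula is in place, the remainder of the proof is a standard binomial cancellation $\sum_l (-1)^l \binom{p}{l} = 0$ for $p \geq 1$.
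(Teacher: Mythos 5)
Your proposal is correct and follows essentially the same route as the paper: your identity $a_1^k\ShuffleProduct(a_1^l a_0)=\sum_{p=l}^{k+l}\binom{p}{l}a_1^{p}a_0a_1^{k+l-p}$ is the paper's formula (\ref{eq:Three}) after the reindexing $p=l+m$ (using $\binom{l+m}{m}=\binom{l+m}{l}$), and the collapse via $\sum_{l}(-1)^l\binom{p}{l}=0^p$ is the same alternating binomial cancellation the paper performs. The only cosmetic difference is that you also offer a direct interleaving count for the shuffle identity where the paper argues by induction on $k$.
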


\begin{proof}
Observe first that for $k,l\in\NN$ 
\begin{align}
\label{eq:Three}
a_1^k\ShuffleProduct(a_1^l a_0) = a_1^la_0a_1^k + \NewtonSymbol{l+1}{1}a_1^{l+1} a_0 a_1^{k-1}+\cdots+ \NewtonSymbol{l+k}{k}a_1^{l+k} a_0.
\end{align}
Indeed, for $k=0$ the formula is correct. Using the inductive hypothesis for each $m \leq k$, and the defining formula for shuffle product (\ref{eq:ShuffleDef})  we get
\begin{align*}
a_1^{k+1}\ShuffleProduct(a_1^l a_0) &= (a_1^{k}\ShuffleProduct(a_1^l a_0))a_1 + (a_1^{k+1}\ShuffleProduct a_1^l)a_0 \\
&= a_1^la_0a_1^{k+1} + \NewtonSymbol{l+1}{1}a_1^{l+1} a_0 a_1^{k} +\cdots+ \NewtonSymbol{l+k}{k}a_1^{l+k} a_0 a_1 + (a_1^{k+1}\ShuffleProduct a_1^l)a_0.
\end{align*}
Since  $a_1^{k+1}\ShuffleProduct a_1^l = \NewtonSymbol{l+k+1}{k+1}a_1^{l+k+1}$, 
we obtain formula (\ref{eq:Three}).

Using the above proved formula we see that
\begin{align*}
\sum_{k,l=0}^\infty (-1)^l\, a_1^k\ShuffleProduct(a_1^l a_0) &=
\sum_{k,l=0}^\infty \sum_{m=0}^{k} (-1)^l\,  \NewtonSymbol{l+m}{m}a_1^{l+m} a_0 a_1^{k-m} \\
&=
\sum_{k',l'=0}^\infty \left[\sum_{m=0}^{l'} (-1)^{l'-m}\,  \NewtonSymbol{l'}{m}\right]\, a_1^{l'} a_0 a_1^{k'},
\end{align*} 
where in the last line we change a method of summation by putting $k'=k-m$ and $l'=l+m$. Since the expression in the squared brackets equals $0^{l'}$, the sum over $l'$ reduces to the one summand with $l' = 0$, and therefore
\begin{align*}
\sum_{k,l=0}^\infty (-1)^l\, a_1^k\ShuffleProduct(a_1^l a_0) &=
\sum_{k'=0}^\infty  a_0 a_1^{k'}.
\end{align*} 
This ends the proof.
\end{proof}

From the lemma it follows that 
\begin{align*}
\MainSeries &= \sum_{k=0}^{\infty} a_0a_1^k = \sum_{k,l=0}^\infty (-1)^l\, a_1^k\ShuffleProduct(a_1^l a_0) 
= \sum_{k=0}^\infty \frac {a_1^{\ShuffleProduct k}}{k!} \ShuffleProduct \left(\sum_{k=0}^\infty  \frac{(-a_1)^{\ShuffleProduct l}}{l!}\cdot a_0\right) \\
&= \ExpShuffle{a_1}\ShuffleProduct(\ExpShuffle{-a_1}\cdot a_0).
\end{align*}
Since $\IteratedInt{t}:\Series{\RR}{\Letters}\to \RR$ is a shuffle-algebra homomorphism, it follows that $\IteratedInt{t}(\ExpShuffle{S}) = \exp(\IteratedInt{t}(S))$ for all series $S$ such that $\ScalarProduct{S}{\EmptyWord}{} = 0$, and therefore
\begin{align*}
\Solution(t) = \IteratedInt{t}(\MainSeries) = \exp\left(\int_0^t u_1(s)\, ds\right)\, \int_0^t \exp\left(-\int_0^s u_1(\tau)\, d\tau\right)\, u_0(s)\, ds,
\end{align*}
which is the standard formula.

For $n=2$ the equation under consideration is  
\begin{align}
\label{eq:Riccati}
\dot x(t) &= u_0(t) + 2 u_1(t) x(t) + u_2(t) x^2(t), & x(0) &= 0,
\end{align} 
that is a general Riccati equation. 
In this case, the series $\MainSeries$ is the unique solution of
\begin{align}
\label{eq:MainSeriesTwo}
\MainSeries = a_0 + 2\, \MainSeries\cdot a_1 +  \MainSeries\ShuffleProduct\MainSeries\cdot a_2,
\end{align}
and therefore $\MainSeries = \sum_k \MainSeries_k$, where $\MainSeries_k$ are given by the recurrence 
\begin{align}
\label{eq:RecurenceTwo}
\MainSeries_0 &= 0, & \MainSeries_1 & = a_0, &  \MainSeries_{2} &= 2\, \MainSeries_1\cdot a_1, & 
\MainSeries_{k+1} &= 2\, \MainSeries_k\cdot a_1 +  \sum_{{l}=1}^{k-1} \MainSeries_{l}\ShuffleProduct\MainSeries_{k-l} \cdot a_2.
\end{align}
Let us mention that the Riccati equation is a Lie-Scheffers system of the type $\mathfrak{a}_1$ (see \cite{Carinena07Riccati,Carinena11Riccati} and \cite{Redheffer56Solutions,Redheffer57Riccati}). More precisely, if we take vector fields 
\begin{align*}
 X_0(x) & = \frac \partial {\partial x}, & X_1(x) &= 2x \frac \partial {\partial x}, &  X_2(x) &= x^2 \frac \partial {\partial x}
\end{align*}
on $\RR$, then they satisfy the following commutation relations
\begin{align*}
[X_0,X_1] &= 2X_0, & [X_0, X_2] &= X_1, & [X_1, X_2] = 2 X_2.
\end{align*}
It means the vector fields span a simple Lie algebra of the type $\mathfrak{a}_1$ (isomorphic to $\mathfrak{sl}(2,\RR)$), and thus \eqref{eq:Riccati} -- equivalent to $\dot x(t) = \sum u_i(t) X_i$ -- is a Lie-Scheffers system of this type. The solution in terms of iterated integrals of $u_i$'s for such a system was given in \cite{Pietrzkowski12Explicit}. Let us recall the main theorems of this article.

\begin{theorem}[Theorem 1 in \cite{Pietrzkowski12Explicit}]
\label{thm:Pietrzkowski12Explicit}
Let $X_a, X_b, X_c \in \TangentFields{\Manifold}$ be smooth tangent vector fields on a manifold $\Manifold$ satisfying $[X_a,X_b] = 2X_a,\,  [X_a, X_c] = -X_b,\,  [X_b, X_c] = 2 X_c$. Let $u_a, u_b, u_c : [0,T]\to \RR$ be fixed measurable functions. Then (locally)
the solution $x:[0,T]\to\Manifold$ of the differential equation 
\begin{align*}
 \dot x(t)  & = u_c(t) X_c + u_b(t) X_b + u_a(t) X_a, & x(0) &= x_0 \in \Manifold
\end{align*}
is of the form
\begin{align}
\label{eq:Solution}
x(t) = \VectorFlow{\Xi_c(t) X_c} \VectorFlow{\Xi_b(t) X_b} \VectorFlow{\Xi_a(t) X_a} (x_0).
\end{align}
Here, $\Xi_a, \Xi_b, \Xi_c : [0,T] \to \RR$ are given by $\Xi_d(t) := \Upsilon^t(\WordsInSeriesOfLength{d}{})$ (for $d = a, b, c$), where 
\begin{align}
\label{eq:ThreeFormulas}
 \WordsInSeriesOfLength{a}{} &= a\cdot \ExpShuffle{2\AOneSeries}, &
\WordsInSeriesOfLength{b}{} &= {\AOneSeries}, &
\WordsInSeriesOfLength{c}{} &= \ExpShuffle{2\AOneSeries}\cdot c, 
\end{align}
and $\AOneSeries\in\Series{\RR}{\Letters}$  
is the unique solution of the algebraic equation
\begin{align*}
\AOneSeries =  b - a\cdot \ExpShuffle{2\AOneSeries}\cdot c.
\end{align*}
In particular, we have
\begin{align*}
b - a\cdot\WordsInSeriesOfLength{c}{} = \WordsInSeriesOfLength{b}{} = b - \WordsInSeriesOfLength{a}{}\cdot c.
\end{align*}
\end{theorem}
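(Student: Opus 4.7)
The plan is to verify the factorization \eqref{eq:Solution} directly, by differentiating the product of flows and matching against $\dot x = u_c X_c + u_b X_b + u_a X_a$. Setting $g(t) = \exp(\Xi_c X_c)\exp(\Xi_b X_b)\exp(\Xi_a X_a)$, the standard formula for the logarithmic derivative of a product of exponentials gives
$$\dot g\, g^{-1} = \dot\Xi_c X_c + \dot\Xi_b \operatorname{Ad}_{\exp(\Xi_c X_c)}(X_b) + \dot\Xi_a \operatorname{Ad}_{\exp(\Xi_c X_c)\exp(\Xi_b X_b)}(X_a),$$
so the problem reduces to expanding two adjoint actions and then solving a scalar ODE system.

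Since the hypothesis makes $\langle X_a, X_b, X_c\rangle$ a Lie algebra of type $\mathfrak{a}_1$, the relevant $\operatorname{ad}$-operators truncate after finitely many terms: $\operatorname{ad}_{X_c}^2(X_b) = 0$, $\operatorname{ad}_{X_c}^3(X_a) = 0$, and $\operatorname{ad}_{X_b}(X_a) = -2X_a$. A direct calculation yields
$$\operatorname{Ad}_{\exp(\Xi_c X_c)}(X_b) = X_b - 2\Xi_c X_c, \quad \operatorname{Ad}_{\exp(\Xi_c X_c)\exp(\Xi_b X_b)}(X_a) = e^{-2\Xi_b}\bigl(X_a + \Xi_c X_b - \Xi_c^2 X_c\bigr),$$
and matching coefficients of $X_a, X_b, X_c$ produces the triangular system
$$\dot\Xi_a = e^{2\Xi_b}\, u_a, \qquad \dot\Xi_b = u_b - u_a\Xi_c, \qquad \dot\Xi_c = u_c + 2u_b\Xi_c - u_a\Xi_c^2,$$
with zero initial data. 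In particular the last equation is a Riccati equation, i.e., the $n=2$ case of \eqref{eq:Main}.

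The next step is to lift this ODE system to the algebraic side. Seeking $\Xi_d(t) = \IteratedInt{t}(\WordsInSeriesOfLength{d}{})$ with $\AOneSeries := \WordsInSeriesOfLength{b}{}$, the shuffle-homomorphism property of $\IteratedInt{t}$ yields $e^{2\Xi_b(t)} = \IteratedInt{t}(\ExpShuffle{2\AOneSeries})$; integrating each of the three scalar ODEs and applying the recursive definition of $\IteratedInt{t}$ translates them into the three formulas \eqref{eq:ThreeFormulas} together with the algebraic equation $\AOneSeries = b - a\cdot\ExpShuffle{2\AOneSeries}\cdot c$. Existence and uniqueness of a formal solution $\AOneSeries$ then follow from exactly the same recursion on homogeneous parts as in Proposition \ref{prop:Existence}.

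The main obstacle is convergence on a genuine interval $[0,\epsilon]$. This reduces to the Riccati case of Theorem \ref{thm:Main}: Proposition \ref{prop:NumberOfWords} with $n=2$ gives $\NumberOfWords\AOneSeries_k = k!$, whence $\sum_k \IteratedInt{t}(\AOneSeries_k)$ converges absolutely on $[0, 1/M)$, and the homogeneous parts of $\WordsInSeriesOfLength{a}{}$ and $\WordsInSeriesOfLength{c}{}$ are controlled by those of $\AOneSeries$ through \eqref{eq:ThreeFormulas}, yielding convergence on the same interval. With $\Xi_a, \Xi_b, \Xi_c$ real-analytic on a small $[0,\epsilon]$, the composition of flows in \eqref{eq:Solution} is well-defined in local coordinates on $\Manifold$ and, by the matching derivation above, solves the original differential equation.
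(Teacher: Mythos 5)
First, a point of reference: the paper does not prove this statement at all --- it is quoted verbatim as Theorem~1 of \cite{Pietrzkowski12Explicit} --- so your argument has to stand on its own. Your overall strategy (a Wei--Norman factorization, reduction to a triangular scalar ODE system via adjoint actions, then translation into shuffle-algebra identities) is the natural one, and your bracket computations are correct: $\operatorname{ad}_{X_c}^2X_b=0$, $\operatorname{ad}_{X_c}^3X_a=0$, $\operatorname{ad}_{X_b}X_a=-2X_a$, and the two $\operatorname{Ad}$ expansions follow. The genuine problem is that the system you derive does not match the formulas you claim it yields. With your conventions (rightmost flow applied first, $\operatorname{Ad}_{\exp Y}=e^{\operatorname{ad}_Y}$), the Riccati equation is satisfied by $\Xi_c$ and $\dot\Xi_a=e^{2\Xi_b}u_a$; but \eqref{eq:ThreeFormulas} together with Theorem~\ref{thm:Riccati} assert the opposite assignment: $\WordsInSeriesOfLength{c}{}=\ExpShuffle{2\AOneSeries}\cdot c$ forces $\dot\Xi_c=e^{2\Xi_b}u_c$, and it is $\Xi_a$ that solves $\dot\Xi_a=u_a+2u_b\Xi_a-u_c\Xi_a^2$. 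Your system is the mirror image, with the roles of $a$ and $c$ interchanged --- exactly the discrepancy produced by ordering the flows the wrong way round (for the theorem as stated, the leftmost factor $\VectorFlow{\Xi_c X_c}$ must act first, reflecting the anti-homomorphism between composition of flows and the group product). Concretely, integrating your $\dot\Xi_b=u_b-u_a\Xi_c$ gives $\AOneSeries=b-\WordsInSeriesOfLength{c}{}\cdot a$, not the required $b-\WordsInSeriesOfLength{a}{}\cdot c$, so the ``matching'' step fails as written.

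Second, the sentence ``integrating each of the three scalar ODEs \dots translates them into the three formulas \eqref{eq:ThreeFormulas}'' glosses over the actual crux. The recursion \eqref{eq:IteratedIntDef} converts $\int_0^t f(s)\,u_d(s)\,ds$ into a series with the letter $d$ concatenated \emph{on the right}; this handles $\WordsInSeriesOfLength{b}{}$ and $\WordsInSeriesOfLength{c}{}$, but $\WordsInSeriesOfLength{a}{}=a\cdot\ExpShuffle{2\AOneSeries}$ carries the letter $a$ on the \emph{left}, so it is not obtained by integrating an equation of the form $\dot\Xi_a=(\cdots)\,u_a$. Identifying the solution of the scalar Riccati component of the Wei--Norman system with the concrete series $a\cdot\ExpShuffle{2\AOneSeries}$ is precisely the content of Theorem~\ref{thm:Riccati} and needs a genuine shuffle-algebra argument; compare the dedicated lemma required in Section~\ref{sec:Examples} merely to move $a_0$ from the left to the right of $\ExpShuffle{a_1}$ in the much simpler linear case. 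Your convergence paragraph is fine in outline, except that Proposition~\ref{prop:NumberOfWords} bounds $\NumberOfWords\MainSeries_k$ for the solution of \eqref{eq:MainSeries}, not $\NumberOfWords\AOneSeries_k$; one must pass through an identity such as $\AOneSeries=a_1+\MainSeries\cdot a_2$ to transfer the estimate.
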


\begin{theorem}[Theorem 2 in \cite{Pietrzkowski12Explicit}]
\label{thm:Riccati}
For fixed measurable functions $u_a, u_b, u_c:[0,T]\to\RR$ the function $\Xi_a:[0,T]\to\RR$, defined in Theorem \ref{thm:Pietrzkowski12Explicit} by $\Xi_a(t) = \Upsilon^t(a\cdot \ExpShuffle{2\AOneSeries})$, is (locally) the solution of the Riccati equation:
\begin{align*}
\dot\Xi_a(t) &= u_a(t) + 2u_b(t)\, \Xi_a(t) - u_c(t)\, \Xi_a^2(t), &
\Xi_a(0) &= 0.
\end{align*}
\end{theorem}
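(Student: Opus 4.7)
My plan is to reduce Theorem~\ref{thm:Riccati} to a purely algebraic identity satisfied by $\WordsInSeriesOfLength a {}$ in $\Series{\RR}{\Letters}$, and then translate it into the analytic statement via the shuffle homomorphism $\IteratedInt t$. Specifically, I will show that
\[
\WordsInSeriesOfLength a {} = a + 2\,\WordsInSeriesOfLength a {}\cdot b - \WordsInSeriesOfLength a {}\ShuffleProduct\WordsInSeriesOfLength a {}\cdot c,
\]
which is the algebraic equation \eqref{eq:MainSeries} for $n=2$ under the identification of letters $a_0\leftrightarrow a$, $a_1\leftrightarrow b$, $a_2\cdot(\cdot)\leftrightarrow -c\cdot(\cdot)$. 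Applying $\IteratedInt t$ to both sides, using \eqref{eq:IteratedIntDef} and $\IteratedInt t(U\ShuffleProduct V)=\IteratedInt t(U)\IteratedInt t(V)$, yields
\[
\Xi_a(t) = \int_0^t u_a(s)\,ds + 2\int_0^t u_b(s)\,\Xi_a(s)\,ds - \int_0^t u_c(s)\,\Xi_a(s)^2\,ds,
\]
which is the integral form of the stated Riccati equation together with $\Xi_a(0)=0$; differentiating in $t$ recovers the ODE. Convergence of $\IteratedInt t(\WordsInSeriesOfLength a {})$ on a small interval is guaranteed by the estimate of Theorem~\ref{thm:Main}, since the recurrence for the signed Riccati algebraic equation obeys the same bound $\NumberOfWords\MainSeries_k \le k!$ on the word-count of its homogeneous parts.

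To verify the algebraic identity, I introduce for each $d\in\Letters$ the linear \emph{right-peel} operator $R_d\colon\Series{\RR}{\Letters}\to\Series{\RR}{\Letters}$ with $R_d(w\cdot e)=\delta_{de}\,w$ and $R_d(\EmptyWord)=0$. A short induction on word length from \eqref{eq:ShuffleDef} shows that each $R_d$ is a derivation for shuffle, so $R_d(\ExpShuffle{T}) = \ExpShuffle{T}\ShuffleProduct R_d(T)$ whenever $T$ has no constant term. Since any series is uniquely recovered from its constant term and its family of right-peels, the identity to prove is equivalent to
\[
\WordsInSeriesOfLength a {}|_\EmptyWord = 0,\qquad R_a(\WordsInSeriesOfLength a {}) = \EmptyWord,\qquad R_b(\WordsInSeriesOfLength a {}) = 2\WordsInSeriesOfLength a {},\qquad R_c(\WordsInSeriesOfLength a {}) = -\WordsInSeriesOfLength a {}\ShuffleProduct\WordsInSeriesOfLength a {}.
\]
The first two are immediate: all nontrivial words of $\AOneSeries = b - \WordsInSeriesOfLength a {}\cdot c$ end in $b$ or $c$, so the same is true of every nonempty word of $\ExpShuffle{2\AOneSeries}$; hence $\WordsInSeriesOfLength a {} = a\cdot\ExpShuffle{2\AOneSeries}$ has no constant term and contributes only the single letter $a$ to $R_a$. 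For $d\in\{b,c\}$, the same observation shows $R_d(\WordsInSeriesOfLength a {}) = a\cdot R_d(\ExpShuffle{2\AOneSeries}) = 2a\cdot(\ExpShuffle{2\AOneSeries}\ShuffleProduct R_d(\AOneSeries))$, and from $\AOneSeries = b - \WordsInSeriesOfLength a {}\cdot c$ I read off $R_b(\AOneSeries)=\EmptyWord$ and $R_c(\AOneSeries)=-\WordsInSeriesOfLength a {}$.

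The one remaining step — matching $R_c(\WordsInSeriesOfLength a {}) = -2a\cdot(\ExpShuffle{2\AOneSeries}\ShuffleProduct\WordsInSeriesOfLength a {})$ with $-\WordsInSeriesOfLength a {}\ShuffleProduct\WordsInSeriesOfLength a {}$ — is where the main obstacle lies. Both factors $\WordsInSeriesOfLength a {}=a\cdot\ExpShuffle{2\AOneSeries}$ begin with the letter $a$, so the symmetric ``leftmost-letter'' version of \eqref{eq:ShuffleDef} (proved by an easy induction on total word length) gives $\WordsInSeriesOfLength a {}\ShuffleProduct\WordsInSeriesOfLength a {} = 2a\cdot(\ExpShuffle{2\AOneSeries}\ShuffleProduct\WordsInSeriesOfLength a {})$, closing the identity. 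Once this interplay between shuffle and left-concatenation by $a$ is set up, everything else is bookkeeping with the derivation identity for $\ExpShuffle{\cdot}$ and the homomorphism property of $\IteratedInt t$.
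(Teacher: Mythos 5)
Your proof is correct, and it is worth noting that the paper itself offers no proof of this statement: Theorem \ref{thm:Riccati} is imported verbatim as ``Theorem 2 in \cite{Pietrzkowski12Explicit}'' and used as a black box, so any argument here is necessarily a new route. What you do is show directly that $\WordsInSeriesOfLength{a}{}=a\cdot\ExpShuffle{2\AOneSeries}$ satisfies the $n=2$ algebraic equation \eqref{eq:MainSeries} with $a_2$ replaced by $-c$, and then push the identity through $\IteratedInt{t}$. All the key steps check out: the right-peel operators $R_d$ are indeed shuffle derivations (immediate from \eqref{eq:ShuffleDef}, since the two summands there end in $b$ and $c$ respectively), the reconstruction $S=(\ScalarProduct{S}{\EmptyWord}{})\,\EmptyWord+\sum_d R_d(S)\cdot d$ reduces the identity to the four peel conditions, the computation $R_c(\AOneSeries)=-\WordsInSeriesOfLength{a}{}$ uses the relation $\AOneSeries=b-\WordsInSeriesOfLength{a}{}\cdot c$ which is part of the statement of Theorem \ref{thm:Pietrzkowski12Explicit}, and the final matching $\WordsInSeriesOfLength{a}{}\ShuffleProduct\WordsInSeriesOfLength{a}{}=2a\cdot(\ExpShuffle{2\AOneSeries}\ShuffleProduct\WordsInSeriesOfLength{a}{})$ follows from the left-recursion for the shuffle product together with its commutativity. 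Your approach buys something the paper does not have: combined with the uniqueness in Proposition \ref{prop:Existence}, it proves internally that $\MainSeries=a_0\cdot\ExpShuffle{2\AOneSeries}$ solves \eqref{eq:MainSeriesTwo} under the letter substitution $a_2\mapsto -c$, which is exactly the fact the paper extracts from the citation in the discussion following the theorem; the convergence remark via the word-count bound of Theorem \ref{thm:Main} is also sound, since the substitution only changes signs of coefficients and not $\NumberOfWords\MainSeries_k$. The only point I would make explicit is that identifying $\Xi_a$ as \emph{the} (local) solution requires uniqueness for the Riccati ODE, which holds under the Carath\'eodory hypotheses since the right-hand side is locally Lipschitz in $x$.
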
 

Observe that taking $X_a = X_0$, $X_b = X_1$, $X_c = - X_2$ (and therefore $c = -a_2$), and $u_a = u_0$, $u_b = u_1$, $u_c = u_2$, and $\Xi_a(t) = x(t)$ the system \eqref{eq:Riccati} can be put into the context of the above theorems in the following way. From Theorem \ref{thm:Riccati} we conclude that the solution of \eqref{eq:Riccati} is $x(t) = \IteratedInt t (\MainSeries)$, where
\begin{align*}
\MainSeries = a_0\cdot \ExpShuffle{2\AOneSeries},
\end{align*} 
and $\AOneSeries\in\Series{\RR}{\Letters}$  
is, by Theorem \ref{thm:Pietrzkowski12Explicit}, the unique solution of the algebraic equation
\begin{align}
\label{eq:AOneSeries}
\AOneSeries =  a_1 + a_0\cdot \ExpShuffle{2\AOneSeries}\cdot a_2.
\end{align}
Additionally, from the last line in Theorem \ref{thm:Pietrzkowski12Explicit} we conclude that
\begin{align}
\label{eq:AOneSeriesAdd}
\AOneSeries = a_1 + \MainSeries\cdot a_2.
\end{align}
In the discussed article, there was also given a recursive formula for $\AOneSeries$, but observe that in fact the algebraic equation (\ref{eq:MainSeriesTwo}) for $\MainSeries$ is simpler than the equation (\ref{eq:AOneSeries}) for $\AOneSeries$. In consequence, it is reasonable to invert this statement saying that the 
series $\AOneSeries$ is given by \eqref{eq:AOneSeriesAdd}, where $\MainSeries$ is the solution of (\ref{eq:MainSeriesTwo}). Now using Theorem \ref{thm:Main} we get the following corollary about the $\LieAlgebra{a}_1$-type Lie-Scheffers system considered in Theorem \ref{thm:Pietrzkowski12Explicit}.

\begin{proposition}
In the context of Theorem \ref{thm:Pietrzkowski12Explicit}, if $|u_d| \leq M$ for an $M>0$ ($d = a,b,c$), then the solution \eqref{eq:Solution} exists for $0 \leq t < \min\Set{T, 1/M}$.
\end{proposition}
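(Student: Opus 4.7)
The plan is to reduce the proposition to proving that each of the three scalar series $\Xi_a(t), \Xi_b(t), \Xi_c(t)$ in \eqref{eq:Solution} converges on $[0, \min\Set{T, 1/M})$; once that is done, smoothness of $X_a, X_b, X_c$ ensures that the three local flows compose to give a well-defined solution. Under the dictionary preceding the proposition, the equation of Theorem \ref{thm:Pietrzkowski12Explicit} becomes the Riccati equation \eqref{eq:Riccati} whose coefficient functions are each bounded by $M$ in modulus. Theorem \ref{thm:Main} applied with $n = 2$ therefore yields absolute convergence of $\Xi_a(t) = \IteratedInt{t}(\MainSeries)$ on $[0, \min\Set{T, 1/M})$, together with a uniform bound on $|\Xi_a|$ on every compact subinterval (explicitly, the estimate in the proof of Theorem \ref{thm:Main} gives $|\Xi_a(t)| \leq 1/(1 - Mt)$).

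For $\Xi_b$ the idea is to combine \eqref{eq:AOneSeriesAdd} with the identity $\WordsInSeriesOfLength{b}{} = \AOneSeries$ from \eqref{eq:ThreeFormulas} in order to write
\begin{align*}
\Xi_b(t) = \IteratedInt{t}(a_1) + \IteratedInt{t}(\MainSeries \cdot a_2) = \int_0^t u_1(s)\, ds + \int_0^t \Xi_a(s)\, u_2(s)\, ds.
\end{align*}
Both integrals are absolutely convergent on $[0, \min\Set{T, 1/M})$ by the bound on $\Xi_a$ from the previous paragraph, so $\Xi_b$ is defined and continuous there.

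The technical heart is $\Xi_c(t) = \IteratedInt{t}(\ExpShuffle{2\AOneSeries}\cdot c)$. Setting $E = \ExpShuffle{2\AOneSeries}$, the factorisation $\MainSeries = a_0 \cdot E$ recalled in the Examples section gives $\MainSeries_{m+1} = a_0 \cdot E_m$ on homogeneous parts. Since prepending a single letter preserves the word count, Proposition \ref{prop:NumberOfWords} with $n = 2$ yields $\NumberOfWords E_m = \NumberOfWords \MainSeries_{m+1} = (m+1)!$, and consequently
\begin{align*}
|\IteratedInt{s}(E_m)| \leq \frac{\NumberOfWords E_m \cdot (Ms)^m}{m!} = (m+1)(Ms)^m,
\end{align*}
which is summable for $s < 1/M$. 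This absolute summability justifies the standard Fubini-type interchange that converts the shuffle-homomorphism property of $\IteratedInt{s}$ into the identity $\IteratedInt{s}(E) = \exp(2\Xi_b(s))$, a continuous function of $s$; hence $\Xi_c(t) = \int_0^t \IteratedInt{s}(E)\, u_c(s)\, ds$ is well defined on the claimed interval. The main obstacle is exactly this counting-based passage to the shuffle exponential; once it is in place, assembling the three convergences (and invoking smoothness of the vector fields for existence of the flows) gives the proposition.
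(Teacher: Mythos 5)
Your proposal is correct and follows essentially the same route as the paper: convergence of $\IteratedInt{t}(\MainSeries)$ for $t<\min\Set{T,1/M}$ from Theorem \ref{thm:Main} with $n=2$, propagated to $\AOneSeries$ via \eqref{eq:AOneSeriesAdd} and then to all three $\Xi_d$ via \eqref{eq:ThreeFormulas}. The only difference is that you explicitly justify the convergence of $\IteratedInt{t}(\ExpShuffle{2\AOneSeries})$ by the word count $\NumberOfWords E_m=(m+1)!$ inherited from $\MainSeries=a_0\cdot\ExpShuffle{2\AOneSeries}$ -- a step the paper's three-sentence proof leaves implicit -- and this filling-in is sound.
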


\begin{proof}
By Theorem \ref{thm:Analytic} the function $\IteratedInt t (\MainSeries)$ is well defined for $0 \leq t < \min\Set{T, 1/M}$.
The above observations (in particular formula \eqref{eq:AOneSeriesAdd}) implies that $\IteratedInt t (\AOneSeries)$ is also well defined in this interval. Finally, by formulas \eqref{eq:ThreeFormulas} each function $\Xi_d(t) := \IteratedInt t (\WordsInSeriesOfLength{d}{})$ ($d=a, b, c$) is defined for $0 \leq t < \min\Set{T, 1/M}$, too.
\end{proof}

Let us observe that in each of the discussed cases the solution is of the form $\MainSeries = a_0\cdot\ExpShuffle{L}$, where $L\in\Series{\RR}{\Letters}$ such that $\ScalarProduct{L}{\EmptyWord}{} = 0$, and the series $L$ in case $n$ reduces to $L$ in case $n-1$ if taking $u_n \equiv 0$. Indeed, $L = 0$ for $n=0$, $L = a_1$ for $n=1$, and $L = \AOneSeries$ for $n=2$ reduces, by (\ref{eq:AOneSeries}), to $a_1$ for $u_2 \equiv 0$. This observation suggests a question: if the same holds for all $n\in\NN$? Since Riccati 
equation is essentially the only differential equation on a real line which is connected with the action of a group (namely the special linear group $SL(2)$) \cite{Carinena99Integrability, Carinena2011Lie} one could anticipate that a generalization is impossible. Nevertheless the problem is open.

Another example we are going to consider is the one where there are only two non-vanishing summands, i.e.,  
$\dot x(t) = \NewtonSymbol n m u(t)x^m(t) + u_n(t) x^n(t)$, $x(0) = 0$, and $0\leq m < n$ are fixed. The case $m\neq 0$ has the trivial solution $x(t) \equiv 0$, so in fact we consider 
\begin{align}
\label{eq:n}
\dot x(t) = u_0(t) + u_n(t) x^n(t),\quad x(0) &= 0
\end{align}
with $n\geq 1$ fixed. 

\begin{proposition}
 Let $u_0, u_n :[0,T]\to\RR$ be measurable, bounded functions. Then the solution of \eqref{eq:n} is 
 $x(t) = \sum_{k=0}^{\infty} x_{k}(t)$, where $x_{k}(t)$ are recursively given by $x_0(t) = \int_0^t u_0(s)\, ds$, and 
 $$x_{k}(t) = \sum_{\MultiIndex l \in N(k)} \NewtonSymbolGeneral n l k \int_0^t
  (x_0(s))^{ l_1} \cdots (x_{k-1}(s))^{ l_{k}} \, u_n(s)\, ds,$$ 
  where
 $N(k) = \SetSuchThat{(l_1,\ldots,l_{k}) \in \NN ^k}{n  =  l_1+\cdots +l_k,\, 
 k -1  =  l_2 + 2l_3 + \cdots + (k-1)l_k}.$
\end{proposition}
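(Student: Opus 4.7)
The plan is to specialize Theorem \ref{thm:Analytic}, recalling formula \eqref{eq:SolutionKOne}, to the case where $u_i \equiv 0$ for $1 \leq i \leq n-1$. In this case, for $K \geq 1$ the only non-trivial contribution to $\Solution_{K+1}(t)$ comes from $i = n$ (where $\binom{n}{n} = 1$), so the recursion collapses to
\begin{align*}
\Solution_{K+1}(t) = \sum_{\MultiIndex l \in \MultiIndexSet{n}} \int_0^t \Solution_{l_1}(s) \cdots \Solution_{l_n}(s)\, u_n(s)\, ds,
\end{align*}
with $\MultiIndexSet{n} = \{\MultiIndex l \in \NN^n : l_j \geq 1,\ l_1 + \cdots + l_n = K\}$ and base case $\Solution_1(t) = \int_0^t u_0(s)\, ds$.

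Next, a short induction on $K$ shows that $\Solution_{K+1} \not\equiv 0$ only when $K$ is a non-negative multiple of $n$: if the sum above is non-vanishing then some tuple $(l_1, \ldots, l_n)$ with each $l_j \equiv 1 \pmod n$ satisfies $\sum l_j = K$, forcing $K \equiv 0 \pmod n$. This lets me relabel $x_k(t) := \Solution_{kn+1}(t)$ for $k \geq 0$; substituting $l_j = k_j n + 1$ transforms the collapsed recursion into
\begin{align*}
x_{k+1}(t) = \sum_{\substack{\MultiIndex k \in \NN^n \\ k_1 + \cdots + k_n = k}} \int_0^t x_{k_1}(s) \cdots x_{k_n}(s)\, u_n(s)\, ds.
\end{align*}

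The last step is a combinatorial regrouping. For each tuple $\MultiIndex k$ in the sum, let $l_{j+1} := \#\{i : k_i = j\}$ for $j = 0, 1, \ldots, k$; tuples with the same multiplicity vector contribute the same integrand $x_0^{l_1} x_1^{l_2} \cdots x_k^{l_{k+1}}$, and there are $n!/(l_1! \cdots l_{k+1}!)$ such tuples. The total count $n$ of entries yields $\sum_{j=1}^{k+1} l_j = n$, while $\sum_i k_i = k$ becomes $\sum_{j=1}^{k+1}(j-1) l_j = k$; after the index shift $k \mapsto k - 1$ this is precisely the set $N(k)$ in the statement. Convergence of $\sum_k x_k(t)$ on $[0, \min\{T, 1/(M(n-1))\})$ (or $[0,T]$ for $n \leq 1$) is inherited from Theorem \ref{thm:Main}, since $\{x_k\}$ is the non-vanishing subsequence of $\{\Solution_j\}$. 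The only delicate point is keeping the double indexing straight through the substitution $l_j = k_j n + 1$; once that is in place, the multinomial count is routine.
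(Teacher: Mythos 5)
Your argument is correct, but it runs on the analytic side of the correspondence where the paper stays on the algebraic side, and the two differ in one substantive step. The paper works directly with the algebraic equation $\MainSeries = a_0 + \MainSeries^{\ShuffleProduct n}\cdot a_n$: it establishes the vanishing of the intermediate homogeneous parts $\MainSeries_{kn+l}$ ($l=2,\dots,n$) by expanding the shuffle power, parametrizing the contributing terms by a set $\tilde N$ of exponent vectors, and showing $\tilde N=\emptyset$ through a two-case analysis of its defining equations; the multinomial coefficients $\NewtonSymbolGeneral{n}{l}{k}$ and the set $N(k)$ then fall out of the same shuffle-power expansion, and $\IteratedInt t$ is applied only at the very end. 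You instead specialize the already-proved recursion \eqref{eq:SolutionKOne} to $u_1=\cdots=u_{n-1}\equiv 0$, kill the intermediate terms by a one-line induction modulo $n$ (each surviving index is $\equiv 1 \pmod n$, so a sum of $n$ of them is $\equiv 0$), and recover the multinomial coefficient by grouping the ordered tuples $(k_1,\dots,k_n)$ with $k_1+\cdots+k_n=k$ according to their multiplicity vectors. The combinatorial content is the same identity viewed from two sides, but your vanishing argument is genuinely simpler than the paper's emptiness-of-$\tilde N$ computation, at the price of leaning on Theorem \ref{thm:Analytic} rather than exhibiting the structure of the series $\MainSeries$ itself (which the paper's route makes visible and which is what the surrounding discussion of \eqref{eq:MainSeriesN} uses). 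Your closing remark on convergence via Theorem \ref{thm:Main} is a harmless addition; the paper's own proof does not address it.
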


 Let us write the first few components of the expansion given in the above proposition.
 \begin{align*}
  x_0(t) & = \IteratedInt t (a_0) = \int_0^t u_0(s)\, ds \, , \\
  x_1(t) & = \int_0^t (x_0(s))^n \, u_n(s)\, ds \, , \\
  x_2(t) & = n \int_0^t (x_0(s))^{n-1} x_1(s) \, u_n(s)\, ds\, , \\
  x_3(t) & = \NewtonSymbol n 2 \int_0^t (x_0(s))^{n-2} (x_1(s))^2\, u_n(s)\, ds + n\int_0^t (x_0(s))^{n-1} x_2(s) \, u_n(s)\, ds\, , \\
  x_4(t) & = \NewtonSymbol n 3 \int_0^t (x_0(s))^{n-3} (x_1(s))^3\, u_n(s)\, ds 
  \\ & \quad
  + n(n-1) \int_0^t (x_0(s))^{n-2} x_2(s) x_1(s) \, u_n(s)\, ds 
  + n \int_0^t (x_0(s))^{n-1} x_3(s)  \, u_n(s)\, ds\, .
 \end{align*}

\begin{proof}
 The algebraic equation \eqref{eq:MainSeries} associated with the differential equation \eqref{eq:n} is 
 \begin{align}
 \label{eq:MainSeriesN}
 \MainSeries = a_0 + \MainSeries^{\ShuffleProduct n}\cdot a_n.  
 \end{align}
 Let us first show that the only non-vanishing homogeneous parts of $\MainSeries$ are $\MainSeries_{kn+1}$, where $k\in\NN$. We prove it by the induction on $k$. The $k$-th hypothesis is that $\MainSeries_{kn+l} =0$ for all $l = 2,\ldots,n$. For $k=0$ the hypothesis is clearly correct. Assume it is correct for $k < K$ and let us prove that $\MainSeries_{Kn+2} = \cdots = \MainSeries_{Kn+n} = 0$. Using \eqref{eq:MainSeriesN} and the induction hypothesis we see that
 \begin{align*}
  \MainSeries_{Kn+l} = \sum_{(\MultiIndex{p},\MultiIndex{m})\in \tilde N}
  C(\MultiIndex{p},\MultiIndex{m})\cdot
  \MainSeries_1^{\ShuffleProduct p_1}\ShuffleProduct\cdots \ShuffleProduct \MainSeries_{(K-1)n + 1}^{\ShuffleProduct p_{K}}\ShuffleProduct
  \MainSeries_{Kn+2}^{\ShuffleProduct m_2}\ShuffleProduct\cdots \ShuffleProduct \MainSeries_{Kn + l-1}^{\ShuffleProduct m_{l-1}}
  \cdot a_n,
 \end{align*}
 where 
 the sum is taken over all $(\MultiIndex{p},\MultiIndex{m}) = (p_1,\ldots,p_K,m_2,\ldots, m_{l-1}) \in \tilde N\subset \NN^{K+l-2}$, 
 \begin{align*}
  \tilde N  = \{n  =  p_1+\cdots +p_K &+ m_2 +\cdots+ m_{l-1}, \\ 
  Kn + l -1  & =  p_1 + \cdots + p_{K}((K-1)n+1) \\
  & \quad + m_2(Kn+2)+\cdots+ m_{l-1}(Kn + l-1)\},
 \end{align*}
 and $C(\MultiIndex{p},\MultiIndex{m})= \frac{n!}{\MultiIndex p !\, \MultiIndex m !}$, $\MultiIndex p ! = p_1!\cdots p_K!$,  $\MultiIndex m ! = m_2!\cdots m_{l-1}!$.
If $m_2 + \cdots + m_{l-1} \geq 1$, then from the second equality defining $\tilde N$ we have
\begin{align*}
 l - 1 & =  p_1 + p_2(n+1)+ \cdots + p_{K}((K-1)n+1) \\
   &\quad  + 2m_2 + \cdots + (l-1)m_{l-1} + (m_2 + \cdots + m_{l-1} -1)Kn,
\end{align*}
which is not less than $n$ (by the first equality defining $\tilde N$), a contradiction. If $m_2 + \cdots + m_{l-1} = 0$, then $p_1 + \cdots + p_K = n$ and therefore
\begin{align*}
 l - 1 = n(1 + p_2 + 2p_3 + \cdots + (K-1)p_K - K).
\end{align*}
But $n$ does not divide $l-1 \in\Set{ 1,\ldots, n-1}$. This implies $\tilde N = \emptyset$ and $\MainSeries_{kn +l} =0$ for $l = 2,\ldots, n$.
We conclude that the solution of \eqref{eq:MainSeriesN} is of the form 
$\MainSeries = \sum_{k=0}^\infty \MainSeries_{kn +1} .$

Now, similarly as above we see from \eqref{eq:MainSeriesN} that
\begin{align*}
  \MainSeries_{Kn+1} = \sum_{\MultiIndex{l}\in  N(k)}
  \frac{n!}{l_1!\cdots l_k!}\cdot\MainSeries_1^{\ShuffleProduct l_1}\ShuffleProduct
  \MainSeries_{n+1}^{\ShuffleProduct l_2}\ShuffleProduct\cdots \ShuffleProduct \MainSeries_{(k-1)n + 1}^{\ShuffleProduct l_{k}}
  \cdot a_n,
 \end{align*}
 where 
 the sum is taken over 
 \begin{align*}
   N(k)  = \SetSuchThat{(l_1,\ldots,l_{k}) \in \NN ^k}{n  =  l_1+\cdots +l_k,\, kn  =  l_1 + l_2(n+1)+ \cdots + l_{k}((k-1)n+1)}.
 \end{align*}
Using the first equation defining $N(k)$ we simplify the second equation defining $N(k)$ as follows:
\begin{align*}
 kn & = l_1 + \cdots + l_k + n(l_2 + 2l_3 + \cdots + (k-1)l_k) \\
 & = n(1 + l_2 + 2l_3 + \cdots + (k-1)l_k).
\end{align*}
Therefore,
$$N(k) = \SetSuchThat{(l_1,\ldots,l_{k}) \in \NN ^k}{n  =  l_1+\cdots +l_k,\, 
 k -1  =  l_2 + 2l_3 + \cdots + (k-1)l_k}.$$
 
Denoting $x_k(t) = \IteratedInt t (\MainSeries_{kn +1})$ and using the homomorphic property of $\IteratedInt t $ we obtain the hypothesis of the proposition.
 
 \end{proof}

\section{Comparison with the Chen-Fliess approach}
\label{sec:Comparison}
 
 In this section we compare the number of non-zero iterated integrals in two approaches: the one given in this article, and the Chen-Fliess one. Recall that in the letter approach \cite{Fliess81Fonctionnelles} we assume we have a differential equation 
 \begin{align*}
\dot x(t) &= u_0(t)\, X_0(x(t)) +  u_1(t)\, X_1(x(t)) + \cdots +  u_n(t)\, X_n(x(t)), \\
\nonumber
x(0) &= 0,
\end{align*}
where $X_i(x) = x^i \frac \partial {\partial x}$, for $i=0,\ldots,n$, are differential vector fields on $\RR$. The solution is given by
\begin{align}
\label{eq:ChenFliess}
x(t) = \sum_{v\in\WordsOfLength{}}\IteratedInt t (v) \, X_v(x)(0),
\end{align}
where for $v = a_{i_1}\cdots a_{i_k} \in \WordsOfLength {} $ we define $X_v(x)(0) := X_{i_1}\cdots X_{i_k}(x)(0)$ as a composition of vector fields acting on the function $h(x) = x$ and evaluated at the initial value $x_0 = 0$. Since $X_i(x)(0) \neq 0$ only for $i=0$ the sum can be significantly reduced. Our aim is to eliminate all unnecessary summands. Since the second derivative $\frac {\partial^2} {\partial x^2} x = 0$ we need to compute $X_v(x)(0)$ modulo the second and higher derivatives. 

\begin{lemma}
 For $k \geq 2$ and $v = a_{i_1}\cdots a_{i_k} \in\WordsOfLength {k}$ we have 
 $$X_v = i_k(i_k + i_{k-1} - 1)(i_k + i_{k-1} + i_{k-2} - 2)\cdots (i_k + \cdots + i_{2} - k+2)x^{i_1 + \cdots + i_{k} - k+1} \frac {\partial} {\partial x} \mod \frac {\partial^2} {\partial x^2}.$$
\end{lemma}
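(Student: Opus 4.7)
I would prove the lemma by induction on the length $k$ of the word $v$.

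For the base case $k = 2$, a direct computation gives
\begin{align*}
X_{i_1} X_{i_2} = x^{i_1} \frac{\partial}{\partial x}\!\left(x^{i_2}\frac{\partial}{\partial x}\right)
= i_2\, x^{i_1 + i_2 - 1} \frac{\partial}{\partial x} + x^{i_1 + i_2}\frac{\partial^2}{\partial x^2},
\end{align*}
which matches the claim after dropping the second-derivative term (there is a single factor $i_k = i_2$, and the exponent is $i_1 + i_2 - 1 = i_1 + i_2 - 2 + 1$).

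For the inductive step, write $v = a_{i_1}\, w$ with $w = a_{i_2}\cdots a_{i_{k+1}}$ of length $k$, so $X_v = X_{i_1} \circ X_w$. The key observation is that composition with $X_{i_1} = x^{i_1}\partial/\partial x$ on the left cannot produce a first-order term out of a higher-order term: for $j \geq 2$ and any function $g$,
\begin{align*}
X_{i_1}\!\left(g(x)\,\frac{\partial^j}{\partial x^j}\right) = x^{i_1} g'(x)\,\frac{\partial^j}{\partial x^j} + x^{i_1} g(x)\,\frac{\partial^{j+1}}{\partial x^{j+1}},
\end{align*}
so both terms are of order $\geq 2$. Consequently, to compute $X_v$ modulo $\partial^2/\partial x^2$, we may replace $X_w$ by its first-order approximation before composing with $X_{i_1}$.

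By the inductive hypothesis applied to $w$ (with the relabeling $j_\ell = i_{\ell+1}$), we have $X_w \equiv c(w)\, x^m\,\partial/\partial x \pmod{\partial^2/\partial x^2}$, where
\begin{align*}
c(w) &= i_{k+1}(i_{k+1}+i_k - 1)\cdots(i_{k+1}+\cdots+i_3 - k+2), \\
m &= i_2 + \cdots + i_{k+1} - k + 1.
\end{align*}
Composing, we find
\begin{align*}
X_v \equiv x^{i_1}\frac{\partial}{\partial x}\!\left(c(w)\, x^m\,\frac{\partial}{\partial x}\right) \equiv c(w)\cdot m \cdot x^{i_1 + m - 1}\frac{\partial}{\partial x} \pmod{\partial^2/\partial x^2}.
\end{align*}
The exponent rearranges to $i_1 + \cdots + i_{k+1} - (k+1) + 1$, and the new factor $m = i_{k+1} + \cdots + i_2 - (k+1) + 2$ is exactly the additional factor required on the right-hand side of the claimed formula for a word of length $k+1$. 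This completes the induction.

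I do not expect any real obstacle: the only point to check carefully is the closure property that $X_{i_1}$ sends operators of order $\geq 2$ to operators of order $\geq 2$, which justifies working modulo $\partial^2/\partial x^2$ throughout, and the routine verification that the recursion $c(v) = c(w)\cdot m$ and the exponent shift $i_1 + m - 1$ match the stated closed form.
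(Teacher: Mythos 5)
Your proof is correct and follows essentially the same route as the paper: induction on $k$, peeling off the leftmost letter $a_{i_1}$ and composing $X_{i_1}$ with the first-order approximation of $X_w$ supplied by the induction hypothesis. The only difference is that you spell out the closure property (that $X_{i_1}$ maps operators of order $\geq 2$ to operators of order $\geq 2$) which the paper leaves implicit; this is a welcome bit of extra care, not a deviation.
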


\begin{proof}
 We use the induction on $k$. The case $k =2$ is clear. Assume $w = a_{i_2}\cdots a_{i_{k+1}} \in \WordsOfLength {k}$ and $v = a_{i_1}w \in \WordsOfLength {k+1}$. By the induction hypothesis 
 $X_w = I x^{\alpha} \frac {\partial} {\partial x} \mod \frac {\partial^2} {\partial x^2},$ 
 where $I = i_{k+1}(i_{k+1} + i_{k} - 1)\cdots (i_{k+1} + \cdots + i_{3} - k+2)$, 
 $\alpha = i_2 + \cdots + i_{k+1} - k+1$, and thus 
 $X_v = x^{i_1 + \alpha-1} I \alpha \frac {\partial} {\partial x} \mod \frac {\partial^2} {\partial x^2}$. This ends the proof.
\end{proof}

Using this lemma we conclude that for $v = a_{i_1}\cdots a_{i_k} \in\WordsOfLength {k}$, $X_v(x)(0) \neq 0$ only if $i_1 + \cdots + i_{k} = k-1$, and therefore \eqref{eq:ChenFliess} simplifies to
\begin{align*}
 x(t) = \sum_{k=1}^\infty \sum_{\MultiIndex i \in M_0(k)} \IteratedInt t (a_{\MultiIndex i}) \, i_k(i_k + i_{k-1} - 1)\cdots (i_k + \cdots + i_{2} - k+2),
\end{align*}
where $a_\MultiIndex {i} := a_{i_1}\cdots a_{i_k}$, and the second sum is taken over all multi-indices $\MultiIndex i = (i_1,\ldots, i_k)$ in the set $M_0(k)\subset (\Set{0,\ldots,n})^k$ given by a one equality and  $k-1$ inequalities:
\begin{align*}
 i_k + \cdots + i_{1} - k+1 & = 0, &
 i_k + \cdots + i_{2} - k+2 & \geq 0, \\
& & & \vdots \\
& & i_k + i_{k-1} - 1 & \geq 0, \\
& & i_k  & \geq 0.
\end{align*}

On the $k$-th step of approximation there are $\Cardinality M_0(k)$ non-trivial integrals to compute. If we assume $n=\infty$ one can compute that these are Catalan numbers, i.e., $\Cardinality M_0(k) = \frac 1 {k+1} \NewtonSymbol{2k}{k}$. The first ten of these numbers are 1, 2, 5, 14, 42, 132, 429, 1430, 4862. We see that this growth is much faster than the growth 1, 2, 3, 5, 7, 11, 15, 22, 30, 42 of integrals needed to compute the $k$-th step of the expansion in our approach, as we mentioned in Remark \ref{rem:Growth}.

\section{Concluding remarks}

In the article we formulated a scheme for expanding a solution of a general non-autonomous polynomial differential equation. The time dependent homogeneous parts of the expansion were expressed in terms of iterated integrals. The formula for each of this part was given recursively by (\ref{eq:Recurence}). 
The advantage of our approach is that it is made on algebraic level. We use the shuffle product which is an algebraic analogue of multiplication of iterated integrals. 
Therefore, the algebraic formula can be easily transformed into the analytic one giving the expansion of the solution of the initial problem, as we stated in Theorem \ref{thm:Analytic}.

Finally, there is some work to be done. One way of a development is to write an explicit formula for the algebraic series $\MainSeries$ preferably with the use of shuffle product. It would also be important to find a deeper algebraic structure of this solution. Another way is to rewrite the scheme for systems of non-autonomous polynomial differential equations and estimate the radius of convergence in this case. This is important for example to integrate higher order Lie-Sheffers systems.

\section*{Acknowledgements}

The author was partially supported by the Polish Ministry of Research and Higher Education grant NN201 607540, 2011-2014.

\bibliographystyle{amsalpha}%
\bibliography{bibliography}

\end{document}